\newtheorem{Theorem}{Theorem}[section]
\newtheorem{Corollary}[Theorem]{Corollary}
\newtheorem{Lemma}[Theorem]{Lemma}
\newtheorem{Proposition}[Theorem]{Proposition}
\begin{document}

\title
[On gwp for nlsr in scaling subcritical and critical cases]
{On global well-posedness for nonlinear semirelativistic equations
in some scaling subcritical and critical cases}

 \author[K. Fujiwara]{Kazumasa Fujiwara}

\address{
Department of Pure and Applied Physics \\ Waseda University \\
3-4-1, Okubo, Shinjuku-ku, Tokyo 169-8555 \\ Japan}

\email{k-fujiwara@asagi.waseda.jp}

\thanks{
The first author was supported in part by the Japan Society for the Promotion of Science,
Grant-in-Aid for JSPS Fellows no 16J30008
and Top Global University Project, Waseda University.
}

\author[V. Georgiev]{Vladimir Georgiev}

\address{
Department of Mathematics \\
University of Pisa \\
 Largo Bruno Pontecorvo 5
 I - 56127 Pisa
 \\ Italy \\
 and \\
 Faculty of Science and Engineering \\ Waseda University \\
 3-4-1, Okubo, Shinjuku-ku, Tokyo 169-8555 \\ 
Japan}

\email{georgiev@dm.unipi.it}

\thanks{ The second author was supported in part by Contract FIRB
" Dinamiche Dispersive: Analisi di Fourier e Metodi Variazionali",
2012, by INDAM, GNAMPA - Gruppo Nazionale per l'Analisi Matematica, la Probabilita e le loro Applicazion and by Institute of Mathematics and Informatics, Bulgarian Academy of Sciences and Top Global University Project, Waseda University.}

 \author[T. Ozawa]{Tohru Ozawa}

\address{
Department of Applied Physics \\ Waseda University \\
3-4-1, Okubo, Shinjuku-ku, Tokyo 169-8555 \\ Japan}

\email{txozawa@waseda.jp}
\thanks{The third author was supported in part by
Grant-in-Aid for Scientific Research (A) Number 26247014.}


\keywords{semirelativistic equation, global well-posedness}

\subjclass[2010]{35Q40, 35Q55}

\maketitle

\begin{abstract}
In this paper, the global well-posedness of semirelativistic
equations with a power type nonlinearity on Euclidean spaces is studied.
In two dimensional $H^s$ scaling subcritical case with $1 \leq s \leq 2$,
the local well-posedness follows from a Strichartz estimate.
In higher dimensional $H^1$ scaling subcritical case,
the local well-posedness for radial solutions follows
from a weighted Strichartz estimate.
Moreover, in three dimensional $H^1$ scaling critical case,
the local well-posedness for radial solutions
follows from a uniform bound of solutions which may be derived by
the corresponding one dimensional problem.
Local solutions may be extended by a priori estimates.
\end{abstract}

\section{Introduction}
In this paper,
we consider the well-posedness of the following Cauchy problem:
	\begin{align}
	\begin{cases}
	i \partial_t u - (- \Delta)^{1/2} u = - i |u|^{p-1} u,
	&\quad t \in \mathbb R,\quad x \in \mathbb R^n,\\
	u(0) = u_0, &\quad x \in \mathbb R^n,
	\end{cases}
	\label{eq:1}
	\end{align}
where $n \geq 1$, $p > 1$, $\Delta$ is the Laplacian,
and $(- \Delta)^{1/2} = \mathfrak F^{-1} |\xi| \mathfrak F$
with the Fourier transform $\mathfrak F$.

Similar models can be connected with the simulations of neuroscience processes.
Typical one is  the cyclical alternation of REM (rapid eye movement)
and NREM (non-rapid eye movement) sleep.
See \cite{ACGM}.
The model of  the alternation of REM and NREM sleep is starting
from the classical Kuramoto model \cite{Kur},
having its origin in special type of  Landau - Ginzburg model
	\begin{equation}
	i \partial_t u - \mathcal{H}  u = - i Q(u), 
	\label{eq:2}
	\end{equation}
where $\mathcal{H}$ is appropriate Hamiltonian operator
and $Q(u)$ is appropriate cubic type nonlinearity.
In this work we substitute the specific cubic nonlinearity $Q(u)$
by a self - interacting nonlinear term
$|u|^{p-1} u$ and our goal is to implement the recent development
of fractional quantum mechanical approach
(see \cite{La00} ) based on the choice of
$\mathcal{H} = D = (- \Delta)^{1/2}$ as a Hamiltonian of the process.

We shall observe some new interesting phenomena.
On one hand, the contraction of some Sobolev norms of the solutions to \eqref{eq:1}
is manifested only for positive time $t>0$,
and therefore we have a similarity to a diffusion type process.

The Cauchy problem for \eqref{eq:1} has different conserved (or bounded)
quantities that can be compared with the classical NLS with self interaction term
	\begin{align}
	\begin{cases}
	i \partial_t u - (- \Delta)^{1/2} u = -  |u|^{p-1} u,
	&\quad t \in \mathbb R,\quad x \in \mathbb R^n,\\
	u(0) = u_0, &\quad x \in \mathbb R^n,
	\end{cases}
	\label{eq:3}
	\end{align}
Indeed, natural Sobolev norm that can be controlled for \eqref{eq:3}
is $H^{1/2}(\mathbb{R}^n)$,
while \eqref{eq:1} enables one to control $H^{1}(\mathbb{R}^n)$ norm
but only in the future time instants $t>0$.

To state our main result, we turn to the introduction of the notations used below.
For a Banach space $X$ and $1 \leq p \leq \infty$
let $L^p(\mathbb R^n;X)$ be a $X$-valued Lebesgue space of $p$-th power.
We abbreviate $L^p(\mathbb R^n;\mathbb C)$ as $L^p(\mathbb R^n)$.
For $f,g \in L^2(\mathbb R^n)$, we define a inner product as
	\[
	\langle f , g \rangle_{L^2(\mathbb R^n)}
	= \int_{\mathbb R^n} f(x) \overline g(x) dx.
	\]
For $s \in \mathbb R$,
let $H^s(\mathbb R^n)$ be the usual inhomogeneous Sobolev space
defined as $H^s(\mathbb R^n) = (1-\Delta)^{-s/2} L^2(\mathbb R^n)$.
Let $\dot H^s(\mathbb R^n)$ be the usual homogeneous Sobolev space
defined as $\dot H^s(\mathbb R^n) = (-\Delta)^{-s/2} L^2(\mathbb R^n)$.
For $f,g: A \to \lbrack 0,\infty)$ with a set $A$,
$f \lesssim g$ means
there exists $C > 0$ for any $a \in A$ such that $f(a) \leq C g(a)$.
For Banach spaces $X,Y$,
$Y \hookrightarrow X$ means
$Y \subset X$ with continuous embedding.
Moreover, we say a Cauchy problem is locally well-posed in $X$,
if for any $X$-valued initial data,
there exists $T > 0$ and a Banach space $Y \hookrightarrow C([0,T];X)$
such that there is a unique solution to the Cauchy problem in $Y$
and $\| u_n - u \|_Y \to 0$ as $\|u_{0,n} - u_0\|_X \to 0$,
where $u_n$ and $u$ are solutions for the Cauchy problem
for initial data $u_0$ and $u_{0,n}$, respectively.
We also say a Cauchy problem is globally well-posed in $X$
if the Cauchy problem is locally well-posed for any $T > 0$.
Moreover, we also say a Cauchy problem is globally well-posed in $X$
with sufficiently small data,
if we have the property above for
sufficiently small $X$-valued data.

\eqref{eq:1} with is invariant under the scale transformation
	\[
	u_\lambda(t,x) = \lambda^{1/(p-1)} u(\lambda t, \lambda x)
	\]
with $\lambda >0$.
Then
	\[
	\|u_{0,\sigma}\|_{\dot H^s (\mathbb R^n)}
	= \sigma^{1/(p-1)+s-n/2} \|u_0\|_{\dot H^s(\mathbb R^n)}
	\]
and with
	\[
	s = s_{n,p} := n/2 - 1/(p-1) < n/2,
	\]
$\dot H^s$ norm of initial data is also invariant.
$s_{n,p}$ is called scale critical exponent.
We also call $p_{n,s} = 1 + 2/(n-2s)$ the $H^s(\mathbb R^n)$ scaling critical power.
For any $s$, in the scaling subcritical case where $p < p_{n,s}$,
\eqref{eq:1} is expected to have local solution for
any $H^s(\mathbb R^n)$ initial data
on the analogy of scaling invariant Schr\"odinger equation.
For instance, we refer the reader \cite{C,CW1,CW2,II,IW}.
However, with power type nonlinearity without gauge invariance,
semirelativistic equations may not be locally well-posed
even in scaling subcritical case,
see \cite{F}.

In this paper,
we show the following global well-posedness for the Cauchy problem \eqref{eq:1}:

\begin{Proposition}
\label{Proposition:1.1}
Let $n=1$.
For $p > 1$
the Cauchy problem \eqref{eq:1} is globally well-posed
in $H^1(\mathbb R^1)$.
Moreover, for $p = 3$,
the Cauchy problem \eqref{eq:1} is globally well-posed
in $H^s(\mathbb R^1)$ with $1 < s \leq 2$.
\end{Proposition}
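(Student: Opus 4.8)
The plan is to establish local well-posedness in $H^1(\mathbb{R}^1)$ (resp.\ $H^s(\mathbb{R}^1)$ for $1<s\le 2$, $p=3$) by a contraction argument, and then to close the loop by deriving an a priori bound on the $H^1$ (resp.\ $H^s$) norm that propagates forward in time $t>0$. Because $n=1$, the relevant operator $(-\Delta)^{1/2}$ generates a unitary group $e^{-it(-\Delta)^{1/2}}$ on $H^s$, and since $H^1(\mathbb{R}^1)\hookrightarrow L^\infty(\mathbb{R}^1)$ is an algebra under multiplication, the nonlinearity $u\mapsto |u|^{p-1}u$ maps $H^1$ into $H^1$ locally Lipschitz-continuously on bounded sets (for $p$ not an odd integer one uses the fractional chain/Leibniz rule together with $\||u|^{p-1}\|_{L^\infty}\lesssim\|u\|_{L^\infty}^{p-1}$ and $|\,|u|^{p-1}u - |v|^{p-1}v\,|\lesssim(|u|^{p-1}+|v|^{p-1})|u-v|$). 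Thus for $X=H^1$ the map
\[
\Phi(u)(t) = e^{-it(-\Delta)^{1/2}}u_0 - \int_0^t e^{-i(t-\tau)(-\Delta)^{1/2}}\bigl(|u(\tau)|^{p-1}u(\tau)\bigr)\,d\tau
\]
is a contraction on a ball in $C([0,T];H^1)$ for $T$ small depending only on $\|u_0\|_{H^1}$; uniqueness and continuous dependence follow in the usual way, giving local well-posedness with $Y = C([0,T];H^1)$. For the second assertion, $p=3$ makes $|u|^{2}u = u^2\bar u$ a genuine polynomial, so the Moser-type estimates $\|u^2\bar u\|_{H^s}\lesssim \|u\|_{L^\infty}^2\|u\|_{H^s}$ and the corresponding difference estimate hold for all $1<s\le 2$ with $H^s(\mathbb{R}^1)\hookrightarrow L^\infty$, and the same fixed-point scheme runs in $C([0,T];H^s)$.

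The heart of the matter is the a priori estimate that upgrades the local solution to a global one. Testing the equation \eqref{eq:1} against $\overline{u}$ and taking real parts: writing $i\partial_t u - (-\Delta)^{1/2}u = -i|u|^{p-1}u$, pair with $u$ in $L^2$, take imaginary part of the resulting identity, and use that $\langle (-\Delta)^{1/2}u,u\rangle$ is real to obtain
\[
\frac{d}{dt}\|u(t)\|_{L^2}^2 = -2\|u(t)\|_{L^{p+1}}^{p+1}\le 0,
\]
so the $L^2$ norm is nonincreasing for $t\ge0$. This is the dissipative/diffusive feature advertised in the introduction. Next, pair the equation with $\partial_t u$ (or, equivalently, differentiate the ``energy'' $E(t)=\tfrac12\langle(-\Delta)^{1/2}u,u\rangle$): one finds
\[
\frac{d}{dt}\,\tfrac12\langle (-\Delta)^{1/2}u(t),u(t)\rangle
= -\,\mathrm{Re}\!\int |u|^{p-1}u\,\overline{(-\Delta)^{1/2}u}\,dx,
\]
and the right-hand side must be shown to be $\le 0$, or at least controllable; this is exactly where the pointwise positivity properties of the kernel of $(-\Delta)^{1/2}$ in one dimension enter. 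I expect the key algebraic fact to be that $\mathrm{Re}\langle |u|^{p-1}u,(-\Delta)^{1/2}u\rangle\ge 0$, which follows from the Balakrishnan formula $(-\Delta)^{1/2} = \tfrac{1}{\pi}\int_0^\infty (-\Delta)(\mu-\Delta)^{-1}\,\frac{d\mu}{\sqrt\mu}$ together with the Beurling--Deny type inequality $\mathrm{Re}\langle |u|^{p-1}u, (-\Delta)(\mu-\Delta)^{-1}u\rangle\ge 0$ — i.e.\ the map $t\mapsto |t|^{p-1}t$ is monotone and $(\mu-\Delta)^{-1}$ has a positive kernel. Granting this, $\|u(t)\|_{\dot H^{1/2}}$ is nonincreasing for $t>0$, and combined with the $L^2$ bound this controls $\|u(t)\|_{H^{1/2}}$.

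To reach $H^1$ (resp.\ $H^s$) one bootstraps: apply $(-\Delta)^{1/4}$ (resp.\ a fractional derivative of the appropriate order) to the equation, or iterate the monotonicity one further level, using at each stage that $\mathrm{Re}\langle D^{\sigma}(|u|^{p-1}u), D^{\sigma+1}u\rangle$ has a favorable sign or is bounded by $\|u\|_{L^\infty}^{p-1}\|D^{\sigma}u\|_{L^2}\|D^{\sigma+1}u\|_{L^2}$ modulo commutator terms; the already-controlled lower Sobolev norm plus $H^{s}\hookrightarrow L^\infty$ in one dimension then closes a Gronwall inequality $\frac{d}{dt}\|u(t)\|_{H^s}^2\lesssim (1+\|u(t)\|_{H^s}^{p-1})\|u(t)\|_{H^s}^2$ — actually with the dissipative sign one expects the cleaner $\frac{d}{dt}\|u(t)\|_{H^s}\le 0$. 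Either way the norm does not blow up in finite forward time, so the local solution extends to all $t\ge0$; for $t<0$ one uses the time-reversal $u(t,x)\mapsto\overline{u(-t,x)}$, which sends \eqref{eq:1} to the same equation with the sign of the nonlinearity flipped, and reruns the argument, or simply observes that backward well-posedness on $[-T,0]$ follows from the same local theory with a priori control coming from reversing time. The main obstacle I anticipate is justifying the sign of $\mathrm{Re}\langle |u|^{p-1}u,(-\Delta)^{1/2}u\rangle$ rigorously at the level of $H^1$ (resp.\ $H^s$) solutions — i.e.\ making the Beurling--Deny computation legitimate rather than formal, which will require either a regularization/approximation argument or a careful direct estimate of the singular integral $\iint \frac{(|u(x)|^{p-1}u(x)-|u(y)|^{p-1}u(y))\,\overline{(u(x)-u(y))}}{|x-y|^2}\,dx\,dy$ and a proof that its real part is nonnegative.
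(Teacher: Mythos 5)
Your overall architecture (contraction in $C([0,T];H^s)$ using $H^1(\mathbb R)\hookrightarrow L^\infty(\mathbb R)$, then forward-in-time a priori bounds, then Gronwall for intermediate $s$) is the same as the paper's, and your $L^2$ identity $\frac{d}{dt}\|u\|_{L^2}^2=-2\|u\|_{L^{p+1}}^{p+1}$ is exactly the paper's Proposition \ref{Proposition:2.1}. The gap is in the step from $L^2$ to $H^1$. The energy you propose to monitor, $E(t)=\tfrac12\langle(-\Delta)^{1/2}u,u\rangle$, is the $\dot H^{1/2}$ norm, and even granting the Stroock--Varopoulos/C\'ordoba--C\'ordoba positivity $\mathrm{Re}\langle|u|^{p-1}u,(-\Delta)^{1/2}u\rangle\ge0$, an $H^{1/2}(\mathbb R)$ bound does not control $\|u\|_{L^\infty(\mathbb R)}$, so it cannot feed the contraction or the higher-order estimates. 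Your proposed escalation to $H^1$ then rests on either (i) a favorable sign for $\mathrm{Re}\langle D^{\sigma}(|u|^{p-1}u),D^{\sigma+1}u\rangle$ at higher $\sigma$, which is not a known inequality and is not established by Beurling--Deny arguments (those work only at the base level $\sigma=0$), or (ii) a Gronwall inequality $\frac{d}{dt}\|u\|_{H^s}^2\lesssim(1+\|u\|_{H^s}^{p-1})\|u\|_{H^s}^2$, which is superlinear for $p>3$ and therefore permits finite-time blow-up of the bound; neither closes the argument for general $p>1$.

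The point you are missing is that no nonlocal positivity is needed at all. Differentiating $\|\nabla u(t)\|_{L^2}^2$ directly, the semirelativistic term drops out by self-adjointness ($\mathrm{Re}\langle -i\nabla Du,\nabla u\rangle=0$), and the nonlinear contribution is signed by a purely pointwise identity:
\begin{equation*}
-2\,\mathrm{Re}\,\langle \nabla(|u|^{p-1}u),\nabla u\rangle_{L^2}
=-2\,\||u|^{\frac{p-1}{2}}\nabla u\|_{L^2}^2-\frac{p-1}{2}\,\||u|^{\frac{p-3}{2}}\nabla|u|^2\|_{L^2}^2\le 0 ,
\end{equation*}
which is the paper's Proposition \ref{Proposition:2.2} and gives monotone decay of $\|u(t)\|_{H^1}$ for $t>0$ for every $p>1$ — this, not the fractional Laplacian's kernel, is where the dissipation lives, and the regularization issue is handled by working with the integral equation \eqref{eq:4} as in \cite{O}. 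Once $\|u(t)\|_{H^1}$, hence $\|u(t)\|_{L^\infty}$, is globally bounded, the case $p=3$, $1<s<2$ closes by the \emph{linear} Gronwall inequality of Proposition \ref{Proposition:2.3} (with coefficient $\|u\|_{L^\infty}^{p-1}\lesssim\|u_0\|_{H^1}^{p-1}$, not $\|u\|_{H^s}^{p-1}$), and $s=2$ by Proposition \ref{Proposition:2.4}. Finally, your remark on negative times is incorrect as a route to global backward bounds: $u(t,x)\mapsto\overline{u(-t,x)}$ flips the nonlinearity to $+i|u|^{p-1}u$, which is anti-dissipative, so the a priori estimates reverse direction; the paper's notion of global well-posedness is only forward in time, consistent with this.
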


\begin{Proposition}
\label{Proposition:1.2}
Let $n=2$.
For $p > 1$ and $3/4 < s < p$,
the Cauchy problem \eqref{eq:1} is locally well-posed
in $H^s(\mathbb R^2)$.
Moreover, for $p > 1$,
the Cauchy problem \eqref{eq:1} is globally well-posed
in $H^1(\mathbb R^2)$.
For $p = 3$,
the Cauchy problem \eqref{eq:1} is globally well-posed
in $H^s(\mathbb R^2)$ with $1 < s \leq 2$.
\end{Proposition}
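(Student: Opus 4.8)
The plan is to establish the three assertions in order, reducing each global statement to local well-posedness together with an a priori bound that exploits the dissipative sign of the nonlinearity $-i|u|^{p-1}u$. For the local well-posedness in $H^s(\mathbb R^2)$ with $3/4<s<p$, I would run a contraction argument for the Duhamel formulation
\[
u(t)=e^{-it(-\Delta)^{1/2}}u_0-\int_0^t e^{-i(t-\tau)(-\Delta)^{1/2}}\bigl(|u(\tau)|^{p-1}u(\tau)\bigr)\,d\tau
\]
in a space $Y\hookrightarrow C([0,T];H^s(\mathbb R^2))$ built from the two dimensional Strichartz estimate for the half-wave propagator referred to in the abstract; the lower bound $s>3/4$ reflects the range of admissible exponents in that estimate, while $s<p$ makes $w\mapsto|w|^{p-1}w$ locally Lipschitz from the iteration space into the spaces dual to the Strichartz norms, via the Sobolev embeddings available for this range of $s$. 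In this subcritical setting the existence time depends only on $\|u_0\|_{H^s(\mathbb R^2)}$, so to globalize it suffices to bound that norm on each finite interval.

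For global well-posedness in $H^1(\mathbb R^2)$, apply the local theory with $s=1$, admissible since $3/4<1<p$. Pairing the equation with $u$ in $L^2(\mathbb R^2)$ and taking real parts, the skew-adjoint term $-i(-\Delta)^{1/2}u$ contributes nothing and
\[
\frac{d}{dt}\|u(t)\|_{L^2(\mathbb R^2)}^2=-2\|u(t)\|_{L^{p+1}(\mathbb R^2)}^{p+1}\leq0.
\]
Differentiating the equation and pairing $\nabla(|u|^{p-1}u)$ with $\nabla u$, the pointwise identity $\partial_j(|u|^{p-1}u)=\frac{p+1}{2}|u|^{p-1}\partial_j u+\frac{p-1}{2}|u|^{p-3}u^2\,\overline{\partial_j u}$ together with $\mathrm{Re}\bigl(|u|^{p-3}u^2(\overline{\partial_j u})^2\bigr)\geq-|u|^{p-1}|\partial_j u|^2$ gives $\mathrm{Re}\,\langle\nabla(|u|^{p-1}u),\nabla u\rangle_{L^2(\mathbb R^2)}\geq\int_{\mathbb R^2}|u|^{p-1}|\nabla u|^2\,dx\geq0$, hence
\[
\frac{d}{dt}\|\nabla u(t)\|_{L^2(\mathbb R^2)}^2\leq0.
\]
Thus $\|u(t)\|_{H^1(\mathbb R^2)}\leq\|u_0\|_{H^1(\mathbb R^2)}$ for $t\geq0$ and the local solution extends to $[0,\infty)$; integrating the last inequality also gives the space-time bound $\int_0^\infty\!\int_{\mathbb R^2}|u|^{p-1}|\nabla u|^2\,dx\,dt\leq\frac12\|u_0\|_{\dot H^1(\mathbb R^2)}^2$, to be used below for $p=3$.

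For global well-posedness in $H^s(\mathbb R^2)$ with $p=3$ and $1<s\leq2$, local well-posedness is the case $p=3>s$ of the first part, and I would get the a priori bound by an energy estimate for $(-\Delta)^{s/2}u$. Applying $(-\Delta)^{s/2}$, pairing with $(-\Delta)^{s/2}u$, and writing $(-\Delta)^{s/2}(|u|^2u)=|u|^2(-\Delta)^{s/2}u+[(-\Delta)^{s/2},|u|^2]u$, the linear term again drops out and
\begin{align*}
\frac{d}{dt}\|(-\Delta)^{s/2}u(t)\|_{L^2(\mathbb R^2)}^2
&=-2\int_{\mathbb R^2}|u|^2\bigl|(-\Delta)^{s/2}u\bigr|^2\,dx\\
&\quad-2\,\mathrm{Re}\,\bigl\langle[(-\Delta)^{s/2},|u|^2]u,\,(-\Delta)^{s/2}u\bigr\rangle_{L^2(\mathbb R^2)}.
\end{align*}
The first term on the right is dissipative; I would bound the commutator by fractional Leibniz (Kato--Ponce) inequalities, distributing the $s$ derivatives of $|u|^2$ over its two factors and applying Cauchy--Schwarz against $\bigl(\int_{\mathbb R^2}|u|^2|(-\Delta)^{s/2}u|^2\,dx\bigr)^{1/2}$, so that after Gagliardo--Nirenberg interpolation of the remaining factors between $H^1(\mathbb R^2)$ and $H^s(\mathbb R^2)$ it is controlled by $\varepsilon\int_{\mathbb R^2}|u|^2|(-\Delta)^{s/2}u|^2\,dx+C(\|u_0\|_{H^1})\bigl(1+g(t)\bigr)\|u(t)\|_{H^s(\mathbb R^2)}^2$, where $g(t)$ is dominated by the dissipation density $\int_{\mathbb R^2}|u(t)|^2|\nabla u(t)|^2\,dx$ and hence lies in $L^1_{\mathrm{loc}}([0,\infty))$ by the previous step. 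Absorbing the first term and using $\|u(t)\|_{H^1}\leq\|u_0\|_{H^1}$ leaves $\frac{d}{dt}\|u(t)\|_{H^s}^2\leq C(\|u_0\|_{H^1})(1+g(t))\|u(t)\|_{H^s}^2$, whence Gronwall's inequality bounds $\|u(t)\|_{H^s(\mathbb R^2)}$ on every interval $[0,T]$ and the solution is global. The case $s=2$ is the model and essentially routine: $(-\Delta)^{s/2}=-\Delta$ obeys the Leibniz rule, $\Delta(|u|^2u)=2\overline u\sum_j(\partial_j u)^2+2|u|^2\Delta u+4u|\nabla u|^2+u^2\Delta\overline u$, the terms $2|u|^2\Delta u+u^2\Delta\overline u$ contribute at least $\int_{\mathbb R^2}|u|^2|\Delta u|^2\,dx$ after taking real parts, and the genuinely lower order terms are handled by $\int_{\mathbb R^2}|u||\nabla u|^2|\Delta u|\,dx\leq\varepsilon\int_{\mathbb R^2}|u|^2|\Delta u|^2\,dx+C_\varepsilon\|\nabla u\|_{L^4(\mathbb R^2)}^4$ and Ladyzhenskaya's inequality $\|\nabla u\|_{L^4(\mathbb R^2)}^2\lesssim\|\nabla u\|_{L^2(\mathbb R^2)}\|\Delta u\|_{L^2(\mathbb R^2)}$, giving $\frac{d}{dt}\|u(t)\|_{H^2}^2\lesssim\|u_0\|_{H^1}^2\|u(t)\|_{H^2}^2$.

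The main obstacle is exactly the fractional range $1<s<2$: one must split the fractional derivative of $|u|^2u$ so that precisely the dissipative term $\int_{\mathbb R^2}|u|^2|(-\Delta)^{s/2}u|^2\,dx$ appears with the right sign, and then verify that every commutator remainder can be absorbed into it at the cost of a contribution only quadratic in $\|u(t)\|_{H^s(\mathbb R^2)}$, with coefficient governed by $\|u_0\|_{H^1(\mathbb R^2)}$ and the integrable dissipation density. This bookkeeping is delicate, and borderline when $s$ is close to $1$, because $H^1(\mathbb R^2)$ just fails to embed into $L^\infty(\mathbb R^2)$, so the Hölder exponents in $\mathbb R^2$ and the interpolation weights have to be chosen exactly right; the remaining ingredients — the local theory, the $L^2$ and $\dot H^1$ monotonicity, and the Gronwall argument — are standard.
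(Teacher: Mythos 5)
Your first two assertions follow the paper's own route: a contraction argument in a Strichartz space built on the Nakamura--Ozawa estimate (the threshold $s>3/4$ coming from the embedding of $B^{s-\frac32\alpha(r)}_{r}$ into $L^\infty$), and globalization in $H^1$ via the monotonicity of $\|u(t)\|_{L^2}$ and $\|\nabla u(t)\|_{L^2}$, which are exactly Propositions \ref{Proposition:2.1} and \ref{Proposition:2.2}. Two caveats there: for $1<p<2$ the map $w\mapsto|w|^{p-1}w$ is \emph{not} locally Lipschitz at the $H^s$ level, so your claim that the contraction closes directly in the iteration space is too quick; the paper has to contract in the weaker $L^\infty(0,T;L^2)$ topology and then prove continuous dependence in $H^s$ by a separate (and lengthy) argument. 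Also, your differential identities $\frac{d}{dt}\|u\|_{L^2}^2=-2\|u\|_{L^{p+1}}^{p+1}$ and $\frac{d}{dt}\|\nabla u\|_{L^2}^2\le 0$ need the regularization/integral-equation justification the paper supplies; the formal pairing alone is not a proof at $H^1$ regularity.

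The genuine gap is in the third assertion, $p=3$ and $1<s<2$. You propose a direct energy estimate for $(-\Delta)^{s/2}u$, splitting off the dissipative term $-2\int|u|^2|(-\Delta)^{s/2}u|^2$ and absorbing the commutator $[(-\Delta)^{s/2},|u|^2]u$ into it plus a term $C(1+g(t))\|u\|_{H^s}^2$ with $g\in L^1_t$. You do not carry this out, and you yourself flag it as the main obstacle; as written it does not close. The Cauchy--Schwarz step you invoke requires the commutator to carry a pointwise factor of $|u|$, which it does not, and any Kato--Ponce bound of the commutator in pure Lebesgue norms produces factors like $\|D^s u\|_{L^{p_2}}$ with $p_2>2$ that are not controlled by $\|u\|_{H^s(\mathbb R^2)}$; some substitute for $\|u\|_{L^\infty}$ is unavoidable, and $H^1(\mathbb R^2)\not\hookrightarrow L^\infty(\mathbb R^2)$. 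The paper's resolution is different and is the idea you are missing: since the $H^1$ norm does not increase, the local existence time $T_1$ from the Strichartz contraction is uniform along the evolution, so summing the Strichartz bound $\|u\|_{X^1(kT_1,(k+1)T_1)}\le 2\|u_0\|_{H^1}$ over $k$ gives the a priori spacetime bound $\|u\|_{L^4(0,t;L^\infty(\mathbb R^2))}\lesssim T_1^{-1}(1+t)\|u_0\|_{H^1}$. Feeding this into the nonlinear estimate $\||u|^{p-1}u\|_{\dot H^s}\lesssim\|u\|_{L^\infty}^{p-1}\|u\|_{\dot H^s}$ (Proposition \ref{Proposition:2.3}) and applying Gronwall then bounds $\|u(t)\|_{H^s}$ on every finite interval, with no need to exploit the sign of the dissipative term at the $\dot H^s$ level. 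Your $s=2$ computation does match the paper's Proposition \ref{Proposition:2.4}, but the fractional range $1<s<2$ is precisely where your argument is incomplete.
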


\begin{Proposition}
\label{Proposition:1.3}
Let $n \geq 3$ and $u_0$ be radial.
For $1 < p < p_{n,1} = 1 + \frac{2}{n-2}$,
the Cauchy problem \eqref{eq:1} is globally well-posed
in $H_{\mathrm{rad}}^1(\mathbb R^3)$.
\end{Proposition}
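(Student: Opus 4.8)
The plan is to deduce the proposition from a local well-posedness statement for radial $H^{1}$-data, combined with an a priori bound showing that $\|u(t)\|_{H^{1}}$ is non-increasing for $t\ge 0$. Since the lifespan in the local theory depends only on $\|u_{0}\|_{H^{1}}$, iterating it with this monotone bound produces a solution on $[0,\infty)$, which is global well-posedness in the sense fixed in the introduction.

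\textbf{Step 1 (weighted Strichartz for the half-wave group).} Set $U(t)=e^{-it(-\Delta)^{1/2}}$. For radial $f$ one has, on a range of exponents strictly larger than the non-radial wave-admissible one, weighted estimates of the form
\[
\bigl\|\,|x|^{\gamma}\,U(t)f\,\bigr\|_{L^{q}_{t}L^{r}_{x}(\mathbb{R}\times\mathbb{R}^{n})}\lesssim\|f\|_{\dot H^{s}(\mathbb{R}^{n})},
\]
with $(q,r,s,\gamma)$ obeying the scaling identity $\frac{1}{q}+\frac{n}{r}+\gamma=\frac{n}{2}-s$; the spherical symmetry supplies the extra decay that enlarges the admissible set and allows a weight favourable for the nonlinear term. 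I would isolate such an estimate --- together with its inhomogeneous (Duhamel) version and its dual --- as a separate lemma, using the radial/angular refinements of Strichartz estimates known for the wave and Klein--Gordon equations. Radial data are needed because in dimension $n\ge 3$ the plain wave-type Strichartz estimates are too lossy to treat the power nonlinearity at regularity $H^{1}$, whereas the weighted radial ones are not; this step is the main obstacle, everything else being comparatively routine.

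\textbf{Step 2 (local well-posedness in $H^{1}_{\mathrm{rad}}$).} From Duhamel's formula
\[
u(t)=U(t)u_{0}-\int_{0}^{t}U(t-\tau)\bigl(|u|^{p-1}u\bigr)(\tau)\,d\tau,
\]
I would run a contraction in a ball of $C([0,T];H^{1}_{\mathrm{rad}}(\mathbb{R}^{n}))\cap\mathcal{X}_{T}$, where $\mathcal{X}_{T}$ is built from the weighted space--time norms of Step 1 applied to $u$ and to $\nabla u$. The nonlinear estimate reduces to controlling, in the relevant dual norm, $|u|^{p-1}u$ and $\nabla(|u|^{p-1}u)$, whose modulus is $\lesssim|u|^{p-1}|\nabla u|$, by powers of $\|u\|_{\mathcal{X}_{T}}$ times a factor $T^{\delta}$ with $\delta>0$; the gain $\delta>0$ is available precisely because $p<p_{n,1}$ is equivalent to $s_{n,p}<1$, i.e.\ $H^{1}$ is scaling subcritical. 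The difference estimate is handled the same way, with the customary care when $p<2$, where $|u|^{p-1}u$ is merely H\"older-$C^{1}$ but still Lipschitz on bounded subsets of the solution space. Since $U(t)$ and $v\mapsto|v|^{p-1}v$ preserve radiality, the iterates remain radial, and one obtains a unique local solution with continuous dependence, the existence time depending only on $\|u_{0}\|_{H^{1}}$.

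\textbf{Step 3 (monotone $H^{1}$ bound and globalization).} Pairing \eqref{eq:1} with $u$ in $L^{2}$ and taking real parts removes the $(-\Delta)^{1/2}$-contribution (which is purely imaginary) and leaves $\frac{d}{dt}\frac{1}{2}\|u(t)\|_{L^{2}}^{2}=-\|u(t)\|_{L^{p+1}}^{p+1}\le 0$. Pairing $-\Delta\partial_{t}u$ with $u$ and taking real parts again kills the $(-\Delta)^{1/2}$-term and, after an integration by parts in the nonlinear term, yields
\[
\frac{d}{dt}\frac{1}{2}\|\nabla u(t)\|_{L^{2}}^{2}=-\int_{\mathbb{R}^{n}}|u|^{p-1}|\nabla u|^{2}\,dx-(p-1)\int_{\mathbb{R}^{n}}|u|^{p-3}\bigl|\mathrm{Re}(\bar u\,\nabla u)\bigr|^{2}\,dx\le 0,
\]
the last integrand being harmless since it is $\le|u|^{p-1}|\nabla u|^{2}$. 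Thus $t\mapsto\|u(t)\|_{H^{1}}$ is non-increasing on $[0,\infty)$ (these identities being derived first for smooth solutions and then transferred by the local theory). Together with Step 2, the local solution then extends, by time steps whose length is bounded below in terms of $\|u_{0}\|_{H^{1}}$, to all of $[0,\infty)$, giving the asserted global well-posedness.
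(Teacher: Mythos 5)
Your proposal is correct and follows essentially the same route as the paper: a weighted space--time estimate for the half-wave group exploiting radial symmetry (the paper uses the weighted $L^{q}_{t}L^{2}_{x}$ Strichartz estimate of Bellazzini--Georgiev--Visciglia together with the Strauss-type radial Sobolev embedding $\||x|^{n/2-s}f\|_{L^{\infty}}\lesssim\|f\|_{\dot H^{s}}$, splitting the nonlinearity near and away from the origin), a contraction argument with a $T^{\delta}$ gain from subcriticality and an auxiliary lower-order norm when $p<2$, and globalization via the monotone $L^{2}$ and $\dot H^{1}$ identities of Propositions \ref{Proposition:2.1} and \ref{Proposition:2.2}. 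The only difference is that your Step 1 is left as a generic black box over $L^{q}_{t}L^{r}_{x}$, whereas the paper pins down the specific $r=2$ weighted estimate and the precise weight-matching condition \eqref{eq:10}.
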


\begin{Proposition}
\label{Proposition:1.4}
Let $n=3$ and $u_0$ be radial.
For $p = p_{3,1} = 3$,
the Cauchy problem of \eqref{eq:1} is globally well-posed in
$H_{\mathrm{rad}}^1(\mathbb R^3)$
with sufficiently small $H_{\mathrm{rad}}^1(\mathbb R^3)$ data.
\end{Proposition}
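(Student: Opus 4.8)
The plan is to transfer \eqref{eq:1} with $n=3$ and $p=3$ to a one dimensional half-wave equation via the classical radial reduction, and then to exploit the transport character of the one dimensional half-wave flow. Writing $r=|x|$ and setting $v(t,r)=r\,u(t,r)$, extended to an odd function of $r\in\mathbb R$, I would use the unitary identification $u\mapsto r\,u$ of radial functions on $\mathbb R^3$ with odd functions on $\mathbb R$: it conjugates $-\Delta$ on $\mathbb R^3$ to $-\partial_r^{2}$ on $\mathbb R$, hence, by the spectral theorem, conjugates $(-\Delta)^{1/2}$ to $(-\partial_r^{2})^{1/2}$, so that \eqref{eq:1} becomes
\[
i\partial_t v-(-\partial_r^{2})^{1/2}v=-\,i\,\frac{|v|^{2}v}{r^{2}},\qquad v(0,\cdot)=r\,u_0,
\]
posed on odd functions of $r$. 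Since this identification is an isometry on $L^{2}$ and, after an integration by parts using $v(0)=0$, satisfies $\|\nabla u\|_{L^{2}(\mathbb R^3)}\approx\|\partial_r v\|_{L^{2}(\mathbb R)}$, it is an isomorphism of $H_{\mathrm{rad}}^{1}(\mathbb R^3)$ onto the odd subspace of $H^{1}(\mathbb R)$, and it is enough to solve the displayed one dimensional problem.

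The advantage of the reduction is that $(-\partial_r^{2})^{1/2}$ generates translations: splitting $v=v_{+}+v_{-}$ by the sign of the frequency, $e^{-it(-\partial_r^{2})^{1/2}}$ acts as $v_{\pm}\mapsto v_{\pm}(\cdot\mp t)$, so the flow is an isometry on every $H^{s}(\mathbb R)$ and loses no derivatives in Duhamel's formula. The singular coefficient $r^{-2}$ would be handled using the Hardy inequality $\|v/r\|_{L^{2}(\mathbb R)}\lesssim\|\partial_r v\|_{L^{2}(\mathbb R)}$ together with the endpoint bound $|v(r)|\lesssim|r|^{1/2}\|v\|_{H^{1}(\mathbb R)}$ for odd $v\in H^{1}(\mathbb R)$, which is nothing but the radial Sobolev inequality $\||x|\,u\|_{L^{\infty}(\mathbb R^3)}\lesssim\|u\|_{H^{1}(\mathbb R^3)}$. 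Combining these with the transport bound should give a uniform bound for the solution of the one dimensional problem in an appropriate scaling-critical space built on $C([0,T];H^{1}(\mathbb R))$, in which the smallness of $u_0$ is used precisely to close the estimate for the cubic term, part of which carries no positive power of $T$ because the problem is $\dot H^{1}$-critical. This uniform bound yields local well-posedness in $H_{\mathrm{rad}}^{1}(\mathbb R^3)$ by the usual contraction scheme; since the data remain small, running the same scheme on $[0,\infty)$ produces the global solution.

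The passage to global time uses the a priori estimates that run through the whole paper, which are clearest in the original variable $u$ and need no symmetry: for $t\ge 0$,
\[
\frac{d}{dt}\|u(t)\|_{L^{2}(\mathbb R^3)}^{2}=-2\|u(t)\|_{L^{4}(\mathbb R^3)}^{4}\le 0,
\]
and, using $\partial_t u=-i(-\Delta)^{1/2}u-|u|^{2}u$, integration by parts, and the pointwise inequality $|u|^{2}|\nabla u|^{2}+2\,|\mathrm{Re}(\bar u\,\nabla u)|^{2}\ge 0$,
\[
\frac{d}{dt}\|\nabla u(t)\|_{L^{2}(\mathbb R^3)}^{2}=-2\int_{\mathbb R^3}\Bigl(|u|^{2}|\nabla u|^{2}+2\,|\mathrm{Re}(\bar u\,\nabla u)|^{2}\Bigr)\,dx\le 0 .
\]
Hence $\|u(t)\|_{H^{1}(\mathbb R^3)}\le\|u_0\|_{H^{1}(\mathbb R^3)}$ for all $t\ge 0$ — the contraction of Sobolev norms for $t>0$ mentioned in the introduction — which keeps the data small and excludes finite-time blow-up, so the local solution extends to a global one.

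I expect the main obstacle to be the scaling-critical nonlinear estimate for $|v|^{2}v/r^{2}$ near $r=0$: because the problem is $\dot H^{1}$-critical no positive power of $T$ is available, so the required gain must be extracted from the joint use of the odd symmetry of $v$, the Hardy inequality, and the endpoint Strauss bound — none of which controls the relevant norm on its own — and it is precisely here that the one dimensional transport structure, with no loss of derivatives, is essential. Once this estimate is in hand, the contraction, the continuation, and the verification of uniqueness and continuous dependence in the iteration space are routine.
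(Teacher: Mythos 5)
Your reduction to a one dimensional problem is in the same spirit as the paper's proof, which converts \eqref{eq:1} into the wave equation $\Box u = F_3(u)$ and then uses the explicit radial representation $J[f](t,r)=\frac{1}{2r}\int_{|r-t|}^{r+t}\lambda\widetilde f(\lambda)\,d\lambda$ of $\sin(tD)D^{-1}$ --- which is precisely your $v=ru$ substitution in disguise --- and your conjugation of $(-\Delta)^{1/2}$ to $|\partial_r|$ on odd functions is correct, as is the monotonicity of $\|u(t)\|_{L^2}$ and $\|\nabla u(t)\|_{L^2}$ for $t>0$ (Propositions \ref{Proposition:2.1} and \ref{Proposition:2.2}). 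The genuine gap is that the entire mathematical content of Proposition \ref{Proposition:1.4} is the scaling-critical estimate you defer: you state that the bound for $|v|^2v/r^2$ must be extracted from the joint use of oddness, Hardy, and the endpoint Strauss bound, observe that none of these suffices alone, and then declare the rest routine ``once this estimate is in hand.'' That estimate is never produced, and it cannot follow from Hardy plus Strauss: since $H^1_{\mathrm{rad}}(\mathbb R^3)\not\hookrightarrow L^\infty(\mathbb R^3)$, the quantity $\|u(t)\|_{L^\infty}=\sup_r |v(t,r)|/r$ may be infinite at fixed $t$, and the Strauss bound $|v(r)|\lesssim r^{1/2}\|v\|_{H^1}$ controls $r^{1/2}u$, not $u$, near the origin. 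What is needed is an integrated-in-time endpoint bound, and this is exactly what the paper supplies: Lemma \ref{Lemma:3.6} dominates $\frac{1}{2r}\int_{|r-t|}^{r+t}$ by the Hardy--Littlewood maximal function evaluated at $t$, whence Corollaries \ref{Corollary:3.7}--\ref{Corollary:3.9} give $\|J[f]\|_{L^2(0,T;L^\infty)}\lesssim\|f\|_{L^2_{\mathrm{rad}}}$ together with its inhomogeneous and differentiated versions, with constants independent of $T$. This $L^2_tL^\infty_x$ estimate is the ``uniform control'' that lets the cubic (and the induced quintic) terms close by smallness alone, with no power of $T$ available; without it, or an equivalent Klainerman--Machedon type bound for your translated odd profiles, your contraction scheme does not close.

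A secondary imprecision: in the critical case the blow-up alternative ``bounded $H^1$ norm implies continuation'' is not available as stated, because the local existence time depends on the profile of the data and not only on its norm. The correct globalization, which the $T$-independence of the constants in Corollaries \ref{Corollary:3.7} and \ref{Corollary:3.8} makes possible, is to run the small-data bootstrap for the $L^2_tL^\infty_x$ norm directly on the whole time interval, using the monotonicity of the $H^1$ norm only to preserve the smallness of the factor $\|u\|_{L^\infty_tH^1_{\mathrm{rad}}}$ in the nonlinear estimates.
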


For three dimensional case $p=3$ is a critical value in view of the result in \cite{I}.
However, the result in \cite{I} treats nongauge invariant nonlinearities
having constant sign,
for which the test function method works.
The question of the existence of local and global solutions
for $n \geq 3$ and $p \geq 1+ 2/(n-2)$ seems still open.

This paper is organized as follows:
In section 2, we collect a priori estimates for \eqref{eq:1}.
In section 3, we prove Propositions
\ref{Proposition:1.1},
\ref{Proposition:1.2},
\ref{Proposition:1.3},
and \ref{Proposition:1.4}.
In one dimensional case,
local well-posedness follows from
a standard contraction argument.
In the case where $n=2$,
local well-posedness follows from
the Strichartz estimate derived by Nakamura and one of the authors in \cite{NO}.
However, with this Strichartz estimate,
we may control solutions uniformly only
in the $H^s(\mathbb R^n)$ setting with $s > (n+1)/4$.
We remark that it seems difficult
to obtain the local well-posedness if $s \leq (n+1)/4$
by a simple application of an improved Strichartz estimate for radial solutions
derived by Guo and Wang in \cite{GW}.
In the case where $n \geq 3$,
a weighted Sobolev space derived by Bellazzini, Visciglia, and one of the author
in \cite{BGV},
and uniform controls derived by Sickel and Skrzypczak in \cite{SS}
(see also \cite{CO})
play an critical role to prove local well-posedness.
Moreover, in the $3$ dimensional scaling critical case where $p=3$,
we obtain a uniform control of solutions by transforming \eqref{eq:1}
into the corresponding $1$ dimensional problem.

\section{A priori estimates}
\label{section:2}
Here we collect some a priori estimates.
The Cauchy problem \eqref{eq:1}
is rewritten as the following integral equation:
	\begin{align}
	u(t) = U (t) u_0 - \int_0^t U (t-t') |u(t')|^{p-1} u(t') dt',
	\label{eq:4}
	\end{align}
where $U(t) = e^{-itD}$ and $D = (-\Delta)^{1/2}$.

\begin{Proposition}
\label{Proposition:2.1}
Let $n \in \mathbb N$ and $p > 1$.
Let $u_0 \in L^{2}(\mathbb R^n)$ and $T>0$.
Let $u \in L^\infty(0,T;L^2(\mathbb R^n)) \cap L^p(0,T; L^{2p}(\mathbb R^n))$
be a solution to the integral equation \eqref{eq:4}
for the initial data $u_0$.
Then, for any $t_1, t_2$ with $0 < t_1 < t_2 < T$,
	\[
	\|u(t_2)\|_{L^2(\mathbb R^n)}^2
	+ 2 \|u\|_{L^{p+1}(t_1,t_2;L^{p+1}(\mathbb R^n))}^{p+1}
	= \|u(t_1)\|_{L^2(\mathbb R^n)}^2.
	\]
\end{Proposition}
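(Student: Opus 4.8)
The plan is to derive the identity as an energy-type estimate by pairing the equation with $u$ in $L^2$, integrating in time, and taking the real part. Starting from the integral equation \eqref{eq:4}, which is equivalent (for solutions in the stated class) to the differential form $i\partial_t u - Du = -i|u|^{p-1}u$ in the sense of distributions, I would formally compute
\[
\frac{d}{dt}\|u(t)\|_{L^2}^2 = 2\,\mathrm{Re}\,\langle \partial_t u, u\rangle_{L^2}.
\]
Since $\partial_t u = -iDu - |u|^{p-1}u$, we get $2\,\mathrm{Re}\,\langle \partial_t u, u\rangle = 2\,\mathrm{Re}\,\langle -iDu, u\rangle - 2\,\mathrm{Re}\,\langle |u|^{p-1}u, u\rangle$. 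The first term vanishes because $D = (-\Delta)^{1/2}$ is self-adjoint, so $\langle -iDu,u\rangle = -i\langle Du,u\rangle$ is purely imaginary; the second term equals $-2\|u\|_{L^{p+1}}^{p+1}$ since $\langle |u|^{p-1}u,u\rangle = \int |u|^{p+1}$ is real and nonnegative. Integrating from $t_1$ to $t_2$ then yields the claimed identity.

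The key steps, in order: first, establish that a solution $u$ in $L^\infty(0,T;L^2)\cap L^p(0,T;L^{2p})$ has enough regularity in time to justify the computation — in particular that $t\mapsto \|u(t)\|_{L^2}^2$ is absolutely continuous on $[t_1,t_2]\subset(0,T)$. This follows from the integral equation: $U(t)$ is a strongly continuous unitary group on $L^2$, and the Duhamel term is continuous in $t$ with values in $L^2$ because the nonlinearity $|u|^{p-1}u$ lies in $L^1(0,T;L^2)$ — indeed, by Hölder in time, $\||u|^{p-1}u\|_{L^1(0,T;L^2)} = \|u\|_{L^p(0,T;L^{2p})}^p < \infty$. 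Second, differentiate the $L^2$ norm; the cleanest rigorous route is to test \eqref{eq:4} against $u(t)$ (or a smooth approximation), or equivalently to use the weak formulation of the PDE with test function $u$. Third, extract the real part, using self-adjointness of $D$ to kill the dispersive term and positivity of $|u|^{p+1}$ for the nonlinear term, and integrate.

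The main obstacle is the rigorous justification of the formal energy computation at the regularity level $L^2$: since $Du$ need not be in $L^2$, the pairing $\langle Du, u\rangle$ is not literally defined, so one cannot naively write $\frac{d}{dt}\|u\|_{L^2}^2$. The standard way around this is a regularization argument: mollify by a Fourier multiplier $J_\varepsilon$ (e.g. $J_\varepsilon = e^{\varepsilon\Delta}$ or a smooth cutoff of $|\xi|$), apply it to \eqref{eq:4}, pair $J_\varepsilon u$ with $i\partial_t(J_\varepsilon u)$, use that $J_\varepsilon$ commutes with $U(t)$ and with $D$ so the dispersive term still contributes a purely imaginary quantity, then pass to the limit $\varepsilon\to 0$ using $J_\varepsilon u \to u$ in $C([t_1,t_2];L^2)$ and $J_\varepsilon(|u|^{p-1}u)\to |u|^{p-1}u$ in $L^1(t_1,t_2;L^2)$, together with $\||u|^{p-1}u\|_{L^1(t_1,t_2;L^{(p+1)/p})}$-type bounds to control the nonlinear pairing; one should also note $\langle |u|^{p-1}u, u\rangle_{L^2}$ makes sense because $|u|^{p-1}u \in L^{(p+1)/p}$ when $u \in L^{p+1}$, and $L^{p+1}$ integrability in space-time is exactly what the hypothesis $u\in L^p(0,T;L^{2p})\cap L^\infty(0,T;L^2)$ gives by interpolation. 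Restricting to $0 < t_1 < t_2 < T$ rather than the closed interval is presumably to avoid any endpoint subtlety at $t=0$ where only $u_0\in L^2$ is assumed.
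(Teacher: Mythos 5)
Your argument is correct in outline, but it is not the route the paper takes, and the difference is precisely the point the authors flag at the start of their proof: they write that ``a formal computation yields immediately the proposition'' but that the ``actual proof requires some regularization procedure,'' and then they deliberately \emph{avoid} that regularization. Your proposal is the formal energy computation plus the standard mollification scheme ($J_\varepsilon = e^{\varepsilon\Delta}$, commute with $U(t)$ and $D$, pass to the limit using $u \in L^{p+1}((t_1,t_2)\times\mathbb R^n)$, which as you correctly note follows from $L^\infty L^2 \cap L^p L^{2p}$ by H\"older). That can be made rigorous, and your identification of the obstacles (the pairing $\langle Du,u\rangle$ being undefined at $L^2$ regularity, the need for absolute continuity of $t\mapsto \|u(t)\|_{L^2}^2$) is accurate; one would still need to check uniform $L^{p+1}\to L^{p+1}$ and $L^{(p+1)/p}\to L^{(p+1)/p}$ bounds for $J_\varepsilon$ and that $J_\varepsilon u\in W^{1,1}(t_1,t_2;L^2)$ before differentiating, but these are routine. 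The paper instead follows Ozawa's method: expand $\langle u(t_2),u(t_2)\rangle_{L^2}$ directly from the Duhamel formula based at $t_1$, use unitarity of $U$ and a Fubini symmetrization to rewrite the quadratic Duhamel--Duhamel term as $2\,\mathrm{Re}\int_{t_1}^{t_2}\langle |u(t)|^{p-1}u(t), \int_{t_1}^{t}U(t-t')|u(t')|^{p-1}u(t')\,dt'\rangle\,dt$, and then resubstitute the integral equation to replace the inner integral by $U(t-t_1)u(t_1)-u(t)$; the cross terms cancel exactly and the identity drops out with no mollifier and no differentiation in time. What the paper's approach buys is that every pairing involved is manifestly well defined under the stated hypotheses (everything reduces to $L^2$ inner products and the $L^{p+1}$ space--time integrability), so no limiting argument is needed; what your approach buys is generality and familiarity, at the cost of the $\varepsilon\to 0$ bookkeeping you sketch but do not carry out. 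Neither route has a gap in principle, but as written your proof is a program rather than a proof: the limit passage in the nonlinear pairing is asserted, not verified.
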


\begin{proof}
A formal computation yields immediately the proposition.
However, actual proof requires some regularization procedure
to justify the formal calculation.
Here we give a direct proof based on the integral equation
on the basis of the method in \cite{O}.
\begin{align*}
	&\langle u(t_2),u(t_2)\rangle_{L^2(\mathbb R^n)}\\
	&=
	\bigg\langle U(t_2-t_1) u(t_1)
	- \int_{t_1}^{t_2} \hspace*{-10pt} U(t_2-t) |u(t)|^{p-1} u(t) dt, u(t_2)
	\bigg\rangle_{L^2(\mathbb R^n)}\\
	&= \|u(t_1)\|_{L^2(\mathbb R^n)}^2
	- 2 \mathrm{Re} \bigg\langle U(t_2-t_1) u(t_1),
	\int_{t_1}^{t_2} \hspace*{-10pt} U(t_2-t) |u(t)|^{p-1} u(t) dt
	\bigg\rangle_{L^2(\mathbb R^n)}\\
	& + \bigg\langle \int_{t_1}^{t_2} U(t_2-t) |u(t)|^{p-1} u(t) dt
	, \int_{t_1}^{t_2} U(t_2-t') |u(t')|^{p-1} u(t') dt' \bigg\rangle_{L^2(\mathbb R^n)}\\
	&= \|u(t_1)\|_{L^2(\mathbb R^n)}^2
	- 2 \mathrm{Re} \bigg\langle U(t_2-t_1) u(t_1),
	\int_{t_1}^{t_2} \hspace*{-10pt} U(t_2-t) |u(t)|^{p-1} u(t) dt
	\bigg\rangle_{L^2(\mathbb R^n)}\\
	& + 2 \mathrm{Re} \int_{t_1}^{t_2} \bigg\langle |u(t)|^{p-1} u(t),
	\int_{t_1}^{t} U(t-t') |u(t')|^{p-1} u(t') dt' \bigg\rangle_{L^2(\mathbb R^n)} dt\\
	&= \|u(t_1)\|_{L^2(\mathbb R^n)}^2
	- 2 \mathrm{Re} \bigg\langle U(t_2-t_1) u(t_1),
	\int_{t_1}^{t_2} \hspace*{-10pt} U(t_2-t) |u(t)|^{p-1} u(t) dt
	\bigg\rangle_{L^2(\mathbb R^n)}\\
	& + 2 \mathrm{Re} \int_{t_1}^{t_2} \big\langle |u(t)|^{p-1} u(t),
	U(t-t_1) u(t_1) - u(t) \big\rangle_{L^2(\mathbb R^n)} dt\\
	&= \|u(t_1)\|_{L^2(\mathbb R^n)}^2
	- 2 \|u\|_{L^{p+1}(t_1,t_2;L^{p+1}(\mathbb R^n))}^{p+1}.
	\end{align*}
\end{proof}

\begin{Proposition}
\label{Proposition:2.2}
Let $n \in \mathbb N$ and $p>1$.
Let $u_0 \in H^{1}(\mathbb R^n)$ and $T>0$.
Let $u \in L^\infty(0,T; H^{1}(\mathbb R^n)) \cap L^{p-1}(0,T; L^\infty(\mathbb R^n))$
be a solution to the integral equation \eqref{eq:4} for the initial data $u_0$.
Then, for any $t_1, t_2$ with $0 \leq t_1 < t_2 \leq T $,
	\begin{align}
	&\|\nabla u(t_2)\|_{L^2(\mathbb R^n)}^2
	+ 2 \| |u|^{\frac{p-1}{2}} \nabla u\|_{L^2(t_1,t_2;L^2(\mathbb R^n))}^2
	\nonumber \\
	&+ \frac{p-1}{2}
	\| |u|^{\frac{p-3}{2}} \nabla |u|^2 \|_{L^2(t_1,t_2;L^2(\mathbb R^n))}^2
	\nonumber \\
	&= \|\nabla u(t_1)\|_{L^2(\mathbb R^n)}^2.
	\label{eq:5}
	\end{align}
\end{Proposition}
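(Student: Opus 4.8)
The plan is to follow the proof of Proposition~\ref{Proposition:2.1}: reduce \eqref{eq:5} to an identity read off directly from the integral equation \eqref{eq:4}, so that no regularization step is needed. Heuristically, differentiating \eqref{eq:1}, pairing with $\nabla u$ in $L^2(\mathbb R^n)$, taking the real part and using the self-adjointness and positivity of $D$ gives
\[
\frac{1}{2}\frac{d}{dt}\|\nabla u(t)\|_{L^2(\mathbb R^n)}^2
= -\,\mathrm{Re}\,\big\langle \nabla(|u|^{p-1}u), \nabla u\big\rangle_{L^2(\mathbb R^n)},
\]
and the whole proof consists in making this rigorous in integrated form and then identifying the right-hand side.

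First I would check that the nonlinear term is differentiable in a usable sense. Since $p>1$, the map $z\mapsto |z|^{p-1}z$ is of class $C^{1}$ on $\mathbb C$ with Jacobian of size $O(|z|^{p-1})$, so for $w\in H^{1}(\mathbb R^n)\cap L^\infty(\mathbb R^n)$ the chain rule yields $|w|^{p-1}w\in H^1(\mathbb R^n)$ with
\[
\nabla(|w|^{p-1}w)=|w|^{p-1}\nabla w+\frac{p-1}{2}|w|^{p-3}w\,\nabla|w|^2,
\]
the last term understood as $0$ on $\{w=0\}$ (legitimate because $\nabla|w|^2=2\mathrm{Re}(\overline w\,\nabla w)=0$ a.e.\ there), and with the pointwise bound $|\nabla(|w|^{p-1}w)|\le p\,|w|^{p-1}|\nabla w|$. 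Applying this with $w=u(s)$ for a.e.\ $s$ and using $u\in L^\infty(0,T;H^1(\mathbb R^n))$ together with $u\in L^{p-1}(0,T;L^\infty(\mathbb R^n))$, one gets $|u|^{p-1}u,\ \nabla(|u|^{p-1}u)\in L^1(0,T;L^2(\mathbb R^n))$. Hence $\nabla$ may be applied to \eqref{eq:4} restarted at $t_1$ (as in Proposition~\ref{Proposition:2.1}), giving in $L^2(\mathbb R^n)$, for $t_1\le t\le t_2$,
\[
\nabla u(t)=U(t-t_1)\nabla u(t_1)-\int_{t_1}^{t}U(t-s)\,G(s)\,ds,\qquad G:=\nabla(|u|^{p-1}u).
\]

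Next I would expand $\|\nabla u(t_2)\|_{L^2}^2=\langle\nabla u(t_2),\nabla u(t_2)\rangle_{L^2}$ by inserting this Duhamel formula into both entries and repeat the manipulation of Proposition~\ref{Proposition:2.1} verbatim: unitarity of $U$ produces $\|\nabla u(t_1)\|_{L^2}^2$; the Hermitian symmetry of $\langle U(s-s')G(s'),G(s)\rangle_{L^2}$ together with Fubini collapses the double time-integral to $2\,\mathrm{Re}\int_{t_1}^{t_2}\langle U(s-t_1)\nabla u(t_1)-\nabla u(s),G(s)\rangle_{L^2}\,ds$, and the $U(s-t_1)\nabla u(t_1)$ part cancels against the mixed term — all integrals converging because $G\in L^1(0,T;L^2(\mathbb R^n))$ and $\nabla u\in L^\infty(0,T;L^2(\mathbb R^n))$. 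This leaves
\[
\|\nabla u(t_2)\|_{L^2}^2=\|\nabla u(t_1)\|_{L^2}^2-2\,\mathrm{Re}\int_{t_1}^{t_2}\big\langle\nabla(|u(s)|^{p-1}u(s)),\nabla u(s)\big\rangle_{L^2}\,ds .
\]
Finally, fixing $s$ with $u(s)\in H^1\cap L^\infty$, I would insert the chain rule above and use $\mathrm{Re}(\overline u\,\partial_j u)=\tfrac12\partial_j|u|^2$ componentwise to get
\[
\mathrm{Re}\,\big\langle\nabla(|u|^{p-1}u),\nabla u\big\rangle_{L^2}=\big\||u|^{\frac{p-1}{2}}\nabla u\big\|_{L^2}^2+\frac{p-1}{4}\big\||u|^{\frac{p-3}{2}}\nabla|u|^2\big\|_{L^2}^2 ,
\]
both terms being finite since $|u|^{\frac{p-3}{2}}\bigl|\nabla|u|^2\bigr|\le 2|u|^{\frac{p-1}{2}}|\nabla u|$; substituting into the previous display gives exactly \eqref{eq:5}.

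The only genuinely delicate point is the low-regularity input in the second paragraph: when $1<p<3$ the map $|z|^{p-1}z$ is $C^1$ but not $C^2$ and $|u|^{p-3}$ is singular on $\{u=0\}$, so the chain rule and the interpretation of $|u|^{p-3}u\,\nabla|u|^2$ there should be justified by the standard truncation argument (approximate $z\mapsto|z|^{p-1}z$ by $z\mapsto(|z|^2+\varepsilon)^{(p-1)/2}z$ and pass to the limit using $\nabla u=0$ a.e.\ on $\{u=0\}$); the remaining bookkeeping is just the verification, via the $L^{p-1}(0,T;L^\infty)$ hypothesis, that $G\in L^1(0,T;L^2)$ so that the argument of Proposition~\ref{Proposition:2.1} applies unchanged.
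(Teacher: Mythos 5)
Your argument is correct and follows essentially the same route as the paper: the paper likewise observes that $|u|^{p-1}u\in L^1(0,T;H^1(\mathbb R^n))$, invokes the energy identity obtained from the integral equation exactly as in Proposition~\ref{Proposition:2.1}, and then splits $\nabla(|u|^{p-1}u)$ by the chain rule into the two terms producing $2\||u|^{\frac{p-1}{2}}\nabla u\|^2$ and $\frac{p-1}{2}\||u|^{\frac{p-3}{2}}\nabla|u|^2\|^2$. The only difference is that you spell out the Duhamel double expansion and the truncation argument for $1<p<3$, which the paper leaves implicit.
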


\begin{proof}
Since $|u|^{p-1} u \in L^1(0,T;H^1(\mathbb R^n))$,
	\begin{align*}
	&\|\nabla u(t_2)\|_{L^{2}(\mathbb R^n)}^2\\
	&= \|\nabla u(t_1)\|_{L^2(\mathbb R^n)}^2
	- 2 \mathrm{Re} \int_{t_1}^{t_2} \big\langle \nabla (|u(t)|^{p-1} u(t)),
	\nabla u(t) \big\rangle_{L^2(\mathbb R^n)} dt\\
	&= \|\nabla u(t_1)\|_{L^2(\mathbb R^n)}^2
	- 2 \mathrm{Re} \int_{t_1}^{t_2} \big\langle \nabla |u(t)|^{p-1},
	\overline {u(t)} \nabla u(t) \big\rangle_{L^2(\mathbb R^n)} dt\\
	&- 2 \int_{t_1}^{t_2} \big\langle |u(t)|^{p-1} \nabla u(t),
	\nabla u(t) \big\rangle_{L^2(\mathbb R^n)} dt\\
	&= \|\nabla u(t_1)\|_{L^2(\mathbb R^n)}^2
	- \frac{p-1}{2} \int_{t_1}^{t_2} \big\langle |u(t)|^{p-3} \nabla |u(t)|^2,
	\nabla |u(t)|^{2} \big\rangle_{L^2(\mathbb R^n)} dt\\
	&- 2 \int_{t_1}^{t_2} \big\langle |u(t)|^{p-1} \nabla u(t),
	\nabla u(t) \big\rangle_{L^2(\mathbb R^n)} dt.
	\end{align*}
\end{proof}

\begin{Proposition}
\label{Proposition:2.3}
Let $n=1,2$, $p>1$, $n/2 < s < \min(2,p)$, and $T>0$.
Let $u_0 \in H^s(\mathbb R^n)$
and $u \in L^\infty(0,T;H^s(\mathbb R^n)) \cap L^2(0,T;L^\infty(\mathbb R^n))$
be a solution to \eqref{eq:4}
for the initial data $u_0$.
Then for any $t_1, t_2$ with $0 < t_1 < t_2 < T$,
	\[
	\|u(t_2)\|_{\dot H^s(\mathbb R^n)}^2
	\leq \|u(t_1)\|_{\dot H^s(\mathbb R^n)}^2
	+ C \int_{t_1}^{t_2} \|u(t)\|_{L^{\infty}(\mathbb R^n)}^{p-1}
	\| u(t) \|_{\dot H^s(\mathbb R^n)}^2 dt.
	\]
\end{Proposition}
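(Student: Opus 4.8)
The plan is to repeat the energy computation in the proof of Proposition~\ref{Proposition:2.2}, but with the Fourier multiplier $D^s=(-\Delta)^{s/2}$ in place of $\nabla$, and then --- since $F(u):=|u|^{p-1}u$ is not smooth enough for an exact identity --- to close the estimate with a fractional chain rule. Recall that $\|v\|_{\dot H^s(\mathbb R^n)}=\|D^sv\|_{L^2(\mathbb R^n)}$, that $D^s$ commutes with $U(t)=e^{-itD}$, and that $U(t)$ is an isometry of $\dot H^s(\mathbb R^n)$.

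First I would record that $F(u)\in L^1(0,T;H^s(\mathbb R^n))$: since $s>n/2$ we have $H^s(\mathbb R^n)\hookrightarrow L^\infty(\mathbb R^n)$, hence $\sup_{0<t<T}\|u(t)\|_{L^\infty(\mathbb R^n)}<\infty$ by $u\in L^\infty(0,T;H^s(\mathbb R^n))$, and then $\|F(u(t))\|_{H^s(\mathbb R^n)}\lesssim\|u(t)\|_{L^\infty(\mathbb R^n)}^{p-1}\|u(t)\|_{H^s(\mathbb R^n)}$ --- using the chain rule below together with the trivial bound $\|F(u(t))\|_{L^2(\mathbb R^n)}\le\|u(t)\|_{L^\infty(\mathbb R^n)}^{p-1}\|u(t)\|_{L^2(\mathbb R^n)}$ --- is bounded, hence integrable, on $(0,T)$. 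Consequently, as in Proposition~\ref{Proposition:2.2}, $u$ may be taken in $C([0,T];H^s(\mathbb R^n))$, and acting on \eqref{eq:4} by $D^s$ and expanding exactly as there (the free part being an $\dot H^s$-isometry, and the cross term recombining with the square of the Duhamel term) gives
\begin{align*}
\|u(t_2)\|_{\dot H^s(\mathbb R^n)}^2=\|u(t_1)\|_{\dot H^s(\mathbb R^n)}^2-2\,\mathrm{Re}\int_{t_1}^{t_2}\big\langle D^sF(u(t)),D^su(t)\big\rangle_{L^2(\mathbb R^n)}\,dt
\end{align*}
(equivalently, $u$ solves $\partial_tu=-iDu-F(u)$, $\mathrm{Re}\langle D^s(-iDu),D^su\rangle=\mathrm{Re}\,(-i\|D^{s+1/2}u\|_{L^2(\mathbb R^n)}^2)=0$, and one integrates). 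Since $-2\,\mathrm{Re}\langle D^sF(u),D^su\rangle\le2\|D^sF(u)\|_{L^2(\mathbb R^n)}\|D^su\|_{L^2(\mathbb R^n)}$, the proof reduces to the fractional chain rule
\begin{align*}
\|D^sF(u)\|_{L^2(\mathbb R^n)}\lesssim\|u\|_{L^\infty(\mathbb R^n)}^{p-1}\|D^su\|_{L^2(\mathbb R^n)},\qquad 0<s<\min(2,p),
\end{align*}
after which the claimed inequality follows by integrating in $t$ and absorbing the constant.

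This fractional chain rule is the crux and the step I expect to be the main obstacle, precisely because $F(z)=|z|^{p-1}z$ is only $C^1$, with derivative of size $|z|^{p-1}$ and H\"older exponent $\min(1,p-1)$. For $0<s\le1$ it follows from the standard fractional Leibniz rule together with $|F(z)-F(w)|\lesssim(|z|^{p-1}+|w|^{p-1})|z-w|$. For $1<s<\min(2,p)$ one must pass to first order: $\|D^sF(u)\|_{L^2}\lesssim\sum_j\|D^{s-1}\partial_jF(u)\|_{L^2}$ by the $L^2$-boundedness of the Riesz transforms, $\partial_jF(u)$ is a combination of terms of the form $G(u)\,\partial_ku$ with $|G(u)|\lesssim|u|^{p-1}$, and one then applies the fractional product rule for $D^{s-1}$ (legitimate since $0<s-1<1$), the fractional chain rule for the H\"older map $z\mapsto|z|^{p-1}$ (legitimate since $s-1<p-1$, i.e.\ $s<p$), and the Gagliardo--Nirenberg interpolation inequalities $\|D^au\|_{L^{2s/a}(\mathbb R^n)}\lesssim\|u\|_{L^\infty(\mathbb R^n)}^{1-a/s}\|D^su\|_{L^2(\mathbb R^n)}^{a/s}$ for $a=s-1$ and $a=1$ to recombine everything into $\|u\|_{L^\infty}^{p-1}\|D^su\|_{L^2}$; the hypothesis $s<2$ is exactly what keeps this argument at first order, while $s>n/2$ is used only (above) to guarantee $H^s\hookrightarrow L^\infty$.
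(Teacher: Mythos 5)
Your proof follows the paper's argument exactly: the same energy identity, the Cauchy--Schwarz step, and the reduction to the nonlinear estimate $\| |f|^{p-1} f \|_{\dot H^s(\mathbb R^n)} \lesssim \| f\|_{L^\infty(\mathbb R^n)}^{p-1} \| f\|_{\dot H^s(\mathbb R^n)}$ for $0<s<\min(2,p)$. The only difference is that the paper simply cites this estimate from \cite[Lemma 3.4]{GOV}, whereas you correctly identify it as the crux and sketch a standard proof of it.
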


\begin{proof}
	\begin{align*}
	&\|u(t_2)\|_{\dot H^s(\mathbb R^n)}^2\\
	&= \|u(t_1)\|_{\dot H^s(\mathbb R^n)}^2
	- 2 \mathrm{Re} \int_{t_1}^{t_2} \langle D^s ( |u(t)|^{p-1} u(t) ),
	D^s u(t) \rangle_{L^2(\mathbb R^n)} dt\\
	&\leq \|u(t_1)\|_{\dot H^s(\mathbb R^n)}^2
	+ 2 \int_{t_1}^{t_2} \| D^s ( |u(t)|^{p-1} u(t) ) \|_{L^2(\mathbb R^n)}
	\| u(t) \|_{\dot H^s(\mathbb R^n)} dt\\
	&\leq \|u(t_1)\|_{\dot H^s(\mathbb R^n)}^2
	+ C \int_{t_1}^{t_2} \|u(t)\|_{L^{\infty}(\mathbb R^n)}^{p-1}
	\| u(t) \|_{\dot H^s(\mathbb R^n)}^2 dt,
	\end{align*}
where we use the nonlinear estimate
	\[
	\| |f|^{p-1} f \|_{\dot H^s(\mathbb R^n)}
	\lesssim \| f\|_{L^\infty(\mathbb R^n)}^{p-1} \| f\|_{\dot H^s(\mathbb R^n)}
	\]
(see \cite[Lemma 3.4]{GOV}).
\end{proof}

\begin{Proposition}
\label{Proposition:2.4}
Let $ 1 \leq n \leq 3$, $u_0 \in H^2(\mathbb R^n)$ and $T>0$.
Let $u \in C((0,T); H^{2}(\mathbb R^n) \cap L^\infty(\mathbb R^n))$
be a solution to the integral equation \eqref{eq:4} for the initial data $u_0$.
Then, for any $t_1, t_2$ with $0 < t_1 < t_2 < T ,$
	\begin{align}
	&\|u(t_2)\|_{\dot H^{2}(\mathbb R^n)}^2
	+ 2 \sum_{j,k=1}^n \int_{t_1}^{t_2}
	\| u(t) \partial_j \partial_k u(t) \|_{L^2(\mathbb R^n)}^2 dt
	\nonumber\\
	&\leq \|u(t_1)\|_{\dot H^2(\mathbb R^n)}^2
	+ 2 n^2 (n+1)
	\int_{t_1}^{t_2}
	\|u(t)\|_{\dot H^1(\mathbb R^n)}^{4-n} \| u(t)\|_{\dot H^2(\mathbb R^n)}^{n} dt.
	\label{eq:6}
	\end{align}
\end{Proposition}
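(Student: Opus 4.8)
The plan is to mimic the proof of Proposition~\ref{Proposition:2.2}, differentiating the integral equation \eqref{eq:4} twice in the spatial variables, testing against $\partial_j \partial_k u$, and summing over $j,k$. As in the previous propositions, one first notes that the regularity assumption $u \in C((0,T);H^2(\mathbb R^n) \cap L^\infty(\mathbb R^n))$ with $n \leq 3$ guarantees $|u|^{p-1}u \in L^1_{\mathrm{loc}}((0,T);H^2(\mathbb R^n))$, so that all the manipulations below are legitimate without a separate regularization step. The starting identity is
	\begin{align*}
	\|u(t_2)\|_{\dot H^2(\mathbb R^n)}^2
	= \|u(t_1)\|_{\dot H^2(\mathbb R^n)}^2
	- 2 \mathrm{Re} \sum_{j,k=1}^n \int_{t_1}^{t_2}
	\langle \partial_j \partial_k (|u|^{p-1} u), \partial_j \partial_k u \rangle_{L^2(\mathbb R^n)} dt.
	\end{align*}

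Next I would expand $\partial_j \partial_k(|u|^{p-1}u)$ by the Leibniz rule into the "good" term $|u|^{p-1}\partial_j\partial_k u$ plus lower-order terms involving one or two derivatives landing on the factor $|u|^{p-1}$. Pairing the good term with $\partial_j\partial_k u$ and taking the real part produces $\sum_{j,k}\| |u|^{(p-1)/2}\partial_j\partial_k u\|_{L^2}^2$, which after bounding $|u|^{p-1} \geq $ something is not quite what is wanted; rather, one wants to keep $\sum_{j,k}\int \| u \,\partial_j\partial_k u\|_{L^2}^2$ on the left-hand side, so I expect the argument actually splits $|u|^{p-1} = |u|^{p-3}\cdot|u|^2$ and one must check the sign of the remaining contributions. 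The cleanest route is probably: integrate by parts once to move a derivative off $\partial_j\partial_k(|u|^{p-1}u)$ and reorganize, as was done with $\nabla|u|^2$ in Proposition~\ref{Proposition:2.2}, so that the genuinely positive quantity $2\sum_{j,k}\int \|u\,\partial_j\partial_k u\|_{L^2}^2$ emerges with the correct constant and all other second-derivative-squared terms are absorbed or shown to have a favorable sign.

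For the remaining error terms — those where at least one derivative hits $|u|^{p-1}$ — the strategy is to estimate them pointwise by expressions of the form $|u|^{p-2}|\nabla u|\,|\partial^2 u|$ and $|u|^{p-3}|\nabla u|^2|\partial^2 u|$ (schematically), then apply H\"older in space to bound the spatial integral by $\|u\|_{L^\infty}^{\,p-2}\|\nabla u\|_{L^4}^{?}\cdots$ and finally invoke Gagliardo--Nirenberg in $\mathbb R^n$ to trade these mixed norms for $\|u\|_{\dot H^1}$ and $\|u\|_{\dot H^2}$. The dimensional restriction $n \leq 3$ is exactly what makes the relevant Gagliardo--Nirenberg interpolation close with the homogeneous norms $\dot H^1$ and $\dot H^2$ only, and it is what produces the exponents $4-n$ and $n$ appearing on the right of \eqref{eq:6}; tracking the interpolation constants is what yields the explicit factor $2n^2(n+1)$. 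The main obstacle I anticipate is bookkeeping: correctly accounting for all terms in the double Leibniz expansion of $\partial_j\partial_k(|u|^{p-1}u)$, verifying that the ones not absorbed into the positive left-hand side term indeed have the right sign (so the inequality, not an equality, is what one gets), and choosing the H\"older exponents so that every factor is controlled by $L^\infty$, $\dot H^1$, or $\dot H^2$ with total homogeneity $4$ — matching the scaling of $\|u\|_{\dot H^2}^2$ — and summing to the stated constant. Care is also needed because the nonlinearity $|u|^{p-1}u$ is only $C^1$ near $u=0$ when $1<p<2$, so the pointwise derivative bounds must be justified as distributional identities valid under the given regularity, much as in the proof of Proposition~\ref{Proposition:2.2}.
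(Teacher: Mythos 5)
Your overall skeleton --- writing $\langle\Delta(|u|^{p-1}u),\Delta u\rangle=\sum_{j,k}\langle\partial_j\partial_k(|u|^{p-1}u),\partial_j\partial_k u\rangle$ via Plancherel, peeling off the good term $|u|^{p-1}\partial_j\partial_k u$, and closing the remainders with Gagliardo--Nirenberg --- is exactly the paper's. But two things keep the proposal from being a proof. First, the proposition is in substance the cubic case $p=3$: the paper's proof works with $|u|^2u$ throughout, which is why the left-hand side carries $\|u\,\partial_j\partial_k u\|_{L^2}^2$ rather than $\||u|^{(p-1)/2}\partial_j\partial_k u\|_{L^2}^2$, and why the right-hand side is quartic in $u$ (and it is only invoked for $p=3$ later). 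Your discussion of splitting $|u|^{p-1}=|u|^{p-3}\cdot|u|^2$ and of the failure of $C^1$ smoothness for $1<p<2$ is a detour; for $p=3$ there is nothing to split and nothing nonsmooth.

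Second, and more seriously, you defer exactly the step on which the proof lives or dies: the Leibniz term in which both derivatives land on $|u|^2$, namely $-2\,\mathrm{Re}\sum_{j,k}\langle\partial_j\partial_k|u|^2,\ \overline{u}\,\partial_j\partial_k u\rangle$. The naive pointwise bound $|\partial_j\partial_k|u|^2|\le 2|u||\partial_j\partial_k u|+2|\partial_j u||\partial_k u|$ yields a contribution $+4\|u\,\partial_j\partial_k u\|_{L^2}^2$ on the wrong side, which overwhelms the $-2\|u\,\partial_j\partial_k u\|_{L^2}^2$ just gained, so ``absorbing into the positive left-hand side term'' cannot work by brute force, and there is no other second-order quantity available to absorb it. The paper's resolution is the identity $2\mathrm{Re}(\overline u\,\partial_j\partial_k u)=\partial_j\partial_k|u|^2-2\mathrm{Re}(\overline{\partial_j u}\,\partial_k u)$, which converts this term into the \emph{negative-definite} quantity $-\sum_{j,k}\|\partial_j\partial_k|u|^2\|_{L^2}^2$ plus a cross term; that negative term is then spent, via Young's inequality with coefficient $\tfrac14$ applied both to the cross term and to the other Leibniz contribution $+2\sum_{j,k}\langle\partial_j^2|u|^2,|\partial_k u|^2\rangle$, so that all $\|\partial_j\partial_k|u|^2\|_{L^2}^2$ contributions cancel exactly and only $\|\nabla u\|_{L^4}^4$-type remainders survive; Gagliardo--Nirenberg for $n\le3$ then gives $\|u\|_{\dot H^1}^{4-n}\|u\|_{\dot H^2}^{n}$ with the stated constant $2n^2(n+1)$. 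Without this sign-revealing rearrangement (or an equivalent one), your plan does not close.
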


\begin{proof}
Since $|u|^2 u \in C((0,T);H^2(\mathbb R^n))$,
the following calculation is justified by the Plancherel identity:
\begin{align*}
	&\|u(t_2)\|_{\dot H^{2}(\mathbb R^n)}^2\\
	&= \|u(t_1)\|_{\dot H^2(\mathbb R^n)}^2
	- 2 \mathrm{Re}
	\int_{t_1}^{t_2} \big\langle \Delta |u(t)|^2 u(t),
	\Delta u(t) \big\rangle_{L^2(\mathbb R^n)} dt\\
	&= \|u(t_1)\|_{\dot H^2(\mathbb R^n)}^2
	- 2 \mathrm{Re} \sum_{j,k=1}^n
	\int_{t_1}^{t_2} \big\langle |u(t)|^2 \partial_j \partial_k u(t),
	\partial_j \partial_k u(t) \big\rangle_{L^2(\mathbb R^n)} dt\\
	&- 4 \mathrm{Re} \sum_{j,k=1}^n
	\int_{t_1}^{t_2} \big\langle \partial_k u(t) \partial_j |u(t)|^2 ,
	\partial_j \partial_k u(t) \big\rangle_{L^2(\mathbb R^n)} dt\\
	&- 2 \mathrm{Re} \sum_{j,k=1}^n
	\int_{t_1}^{t_2} \big\langle \partial_j \partial_k |u(t)|^2 ,
	\overline {u(t)} \partial_j \partial_k u(t) \big\rangle_{L^2(\mathbb R^n)} dt\\
	&= \|u(t_1)\|_{\dot H^2(\mathbb R^n)}^2
	- 2 \sum_{j,k=1}^n \int_{t_1}^{t_2}
	\| u(t) \partial_j \partial_k u(t) \|_{L^2(\mathbb R^n)}^2 dt\\
	&+ 2 \sum_{j,k=1}^n \int_{t_1}^{t_2} \big\langle \partial_j^2 |u(t)|^2 ,
	|\partial_k u(t)|^2 \big\rangle_{L^2(\mathbb R^n)} dt\\
	&- \sum_{j,k=1}^n
	\int_{t_1}^{t_2} \big\langle \partial_j \partial_k |u(t)|^2 ,
	\partial_j \partial_k |u(t)|^2
	- 2 \mathrm{Re} ( \overline{\partial_j u(t)} \partial_k u(t) )
	\big\rangle_{L^2(\mathbb R^n)} dt.
	\end{align*}
By the Young inequality,
	\begin{align*}
	& \sum_{j,k=1}^n
	\big\langle \partial_j^2 |u(t)|^2, |\partial_k u(t)|^2
	\big\rangle_{L^2(\mathbb R^n)}\\
	&\leq \sum_{j,k=1}^n \| \partial_j^2 |u(t)|^2 \|_{L^2(\mathbb R^n)}
	\| \partial_k u(t) \|_{L^4(\mathbb R^n)}^2\\
	& \leq \frac 1 4 \sum_{j=1}^n \| \partial_j^2 |u(t)|^2 \|_{L^2(\mathbb R^n)}^2
	+ n^2 \sum_{k=1}^n \| \partial_k u(t) \|_{L^4(\mathbb R^n)}^4\\
	&\leq \frac 1 4 \sum_{j,k=1}^n
	\| \partial_j \partial_k |u(t)|^2 \|_{L^2(\mathbb R^n)}^2
	+ n^2 \sum_{k=1}^n \| \partial_k u(t) \|_{L^4(\mathbb R^n)}^4.
	\end{align*}
Similarly,
	\begin{align*}
	&\sum_{j,k=1}^n \big\langle \partial_j \partial_k |u(t)|^2 ,
	\mathrm{Re} ( \overline{\partial_j u(t)} \partial_k u(t) )
	\big\rangle_{L^2(\mathbb R^n)}\\
	&\leq \frac 1 4 \sum_{j,k=1}^n
	\| \partial_j \partial_k |u(t)|^2 \|_{L^2(\mathbb R^n)}^2
	+ \sum_{j,k=1}^n \| \partial_j u(t) \|_{L^4(\mathbb R^n)}^2
	\| \partial_k u(t) \|_{L^4(\mathbb R^n)}^2.
	\end{align*}
Therefore, by the Sobolev inequality,
	\begin{align*}
	&\|u(t_2)\|_{\dot H^{2}(\mathbb R^n)}^2\\
	&\leq \|u(t_1)\|_{\dot H^2(\mathbb R^n)}^2
	- 2 \sum_{j,k=1}^n \int_{t_1}^{t_2}
	\| u(t) \partial_j \partial_k u(t) \|_{L^2(\mathbb R^n)}^2 dt\\
	&+ 2 n^2 \sum_{k=1}^n \int_{t_1}^{t_2} \| \partial_k u(t) \|_{L^4(\mathbb R^n)}^4 dt
	+ 2 \sum_{j,k=1}^n \int_{t_1}^{t_2} \| \partial_j u(t) \|_{L^4(\mathbb R^n)}^2
	\| \partial_k u(t) \|_{L^4(\mathbb R^n)}^2 dt\\
	&\leq \|u(t_1)\|_{\dot H^2(\mathbb R^n)}^2
	- 2 \sum_{j,k=1}^n \int_{t_1}^{t_2}
	\| u(t) \partial_j \partial_k u(t) \|_{L^2(\mathbb R^n)}^2 dt\\
	&+ 2 n^2(n+1) \int_{t_1}^{t_2}
	\|u(t)\|_{\dot H^1(\mathbb R^n)}^{4-n} \| u(t)\|_{\dot H^2(\mathbb R^n)}^{n} dt.
	\end{align*}
\end{proof}



\section{Proof of the Propositions}
\subsection{1 dimensional case}
Since $L^\infty(\mathbb R) \hookrightarrow H^1(\mathbb R) \hookrightarrow H^2(\mathbb R)$,
Proposition \ref{Proposition:1.1} is obtained by a standard contraction argument
with the Sobolev inequality and a priori estimates
Propositions \ref{Proposition:2.1}, \ref{Proposition:2.2},
\ref{Proposition:2.3}, and \ref{Proposition:2.4}.

\subsection{2 dimensional case}
In two dimensional case,
the local well-posedness may be obtained
by the following Strichartz estimates:

\begin{Lemma}[{\cite[Lemma 2.1]{NO}}, {\cite[Remark 3.2]{GV}}]
\label{Lemma:3.1}
Let $n = 1,2,3$.
Let
	\[
	\alpha(r) = \frac{1}{2} - \frac{1}{r},\quad
	\lambda = \frac{n+1}{2}, \quad
	\sigma = n - 1.
	\]
Let $(q_1,r_1)$ and $(q_2,r_2)$ satisfy
	\begin{align}
	\frac{1}{r_j} = \frac{1}{2} - \frac{2}{\sigma} \frac{1}{q_j}
	\label{eq:7}
	\end{align}
and $2 \leq r_j \leq \infty$ if $n = 1,2$
and $ 2 \leq r_j < \infty$ if $ n = 3 $ for $j=1,2$.
Then for $s \in \mathbb R$,
	\begin{align*}
	\|U(t) \phi\|_{L^{q_1}(0,T; B_{r_1}^{s - \lambda \alpha(r_1)}(\mathbb R^n))}
	&\lesssim \|\phi\|_{H^s(\mathbb R^n)},\\
	\bigg\| \int_0^t U(t-t') h(t') dt' \bigg\|_{L^{q_1}(0,T; B_{r_1}^{s - \lambda \alpha(r_1)}(\mathbb R^n))}
	&\lesssim \| h \|_{L^{q_2'}(0,T; B_{r_2'}^{s - \lambda \alpha(r_2')}(\mathbb R^n))},
	\end{align*}
where $B_{p,q}^s(\mathbb R^n)$ is the usual inhomogeneous Besov space.
\end{Lemma}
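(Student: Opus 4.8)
The plan is to derive both inequalities by the classical $TT^{*}$ method for the half-wave propagator, exactly along the lines of \cite{NO,GV,GOV}; since this is by now standard I only describe the skeleton. First, decompose $\phi$ (and, for the second estimate, $h$) into Littlewood--Paley blocks $P_{j}$ localised in $\{|\xi|\sim 2^{j}\}$. Because a Besov norm is (equivalent to) the $\ell^{2}$-sum in $j$ of the norms of the blocks, and because $q_{j},r_{j}\ge 2$ lets one move this $\ell^{2}$-sum past $L^{q_{j}}(0,T;L^{r_{j}}(\mathbb{R}^{n}))$ by Minkowski's inequality, it suffices to prove for each $j$ a frequency-localised Strichartz estimate for $U(t)P_{j}$ with operator norm $\lesssim 2^{j\lambda\alpha(r_{j})}$, together with the corresponding retarded bound. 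The rescaling $(t,x)\mapsto(2^{-j}t,2^{-j}x)$ leaves the symbol $|\xi|$ invariant (it is homogeneous of degree one), so this in turn reduces to the single block at frequency $\sim 1$; the factor $2^{j\lambda\alpha(r_{j})}$ emerges from the Jacobians of the rescaling together with the arithmetic identity $\lambda\alpha(r_{j})=n\alpha(r_{j})-\tfrac{1}{q_{j}}=n\alpha(r_{j})-\tfrac{n-1}{2}\alpha(r_{j})$, valid on the line \eqref{eq:7}. (For $n=1$ there is no dispersion, $\sigma=0$, \eqref{eq:7} forces $r=2$, and both inequalities are just the unitarity of $U(t)$ on $H^{s}=B^{s}_{2,2}$.)

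The core input for $n=2,3$ is the dispersive bound for the unit block: for $\chi\in C_{c}^{\infty}(\mathbb{R}^{n})$ supported in an annulus,
\[
\Big\| \mathfrak F^{-1}\big( e^{-it|\xi|}\chi(\xi) \big) \Big\|_{L^{\infty}(\mathbb{R}^{n})} \lesssim (1+|t|)^{-(n-1)/2},
\]
which is the usual stationary-phase estimate, the geometric point being that the unit sphere in $\mathbb{R}^{n}$ has $n-1$ non-vanishing principal curvatures. Interpolating with $\|U(t)\|_{L^{2}\to L^{2}}=1$ gives $\|U(t)U(t')^{*}P_{0}\|_{L^{r'}\to L^{r}}\lesssim |t-t'|^{-(n-1)\alpha(r)}$. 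Feeding this into the $TT^{*}$ argument and then applying the Hardy--Littlewood--Sobolev inequality in time produces the homogeneous estimate for the unit block precisely on the line $\tfrac1r=\tfrac12-\tfrac2\sigma\tfrac1q$, i.e.\ $\tfrac1q=\tfrac{n-1}{2}\alpha(r)$; the retarded estimate follows by composing the homogeneous estimate with its dual (using $(B^{\mu}_{r,2})^{*}=B^{-\mu}_{r',2}$) and applying the Christ--Kiselev lemma to restrict the time integral from $(0,T)$ to $(0,t)$. The pairs named in the statement stay strictly off the Keel--Tao endpoint: the Hardy--Littlewood--Sobolev step requires $0<(n-1)\alpha(r)<1$, whose upper bound for $n=3$ is exactly the stated restriction $r<\infty$ (for $n=2$ it is automatic), so no endpoint refinement is needed, and one checks that $q_{1}>q_{2}'$ for all such pairs, as required by Christ--Kiselev.

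I expect the two genuinely substantive points to be the stationary-phase bound displayed above and the careful bookkeeping of the derivative loss across the Littlewood--Paley sum: the rescaling and the Besov exponents must be aligned so that the per-block loss $2^{j\lambda\alpha(r_{j})}$ is absorbed into the weight $2^{j(s-\lambda\alpha(r_{j}))}$ of the output Besov norm, returning $2^{js}$ and hence, after the $\ell^{2}$-sum in $j$, the norm $\|\phi\|_{H^{s}}=\|\phi\|_{B^{s}_{2,2}}$. Once the dispersive estimate and this bookkeeping are in place, the remaining $TT^{*}$, interpolation, and Christ--Kiselev steps are routine, and the scheme specialises, for each of $n=1,2,3$, to the known wave-type Strichartz inequalities with the loss of derivatives recorded in \cite{NO,GV}.
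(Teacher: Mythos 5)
The paper does not actually prove Lemma \ref{Lemma:3.1}: it is quoted from \cite[Lemma 2.1]{NO} and \cite[Remark 3.2]{GV}, and your sketch reproduces precisely the standard argument behind those references (Littlewood--Paley reduction to a unit frequency block by the degree-one homogeneity of $|\xi|$, the stationary-phase decay $(1+|t|)^{-(n-1)/2}$, $TT^{*}$ with Hardy--Littlewood--Sobolev on the scaling line \eqref{eq:7}, and Christ--Kiselev for the retarded term). Your bookkeeping is also correct --- in particular $\lambda\alpha(r)=n\alpha(r)-1/q$ on \eqref{eq:7}, and the Hardy--Littlewood--Sobolev window $(n-1)\alpha(r)<1$ reproducing exactly the restriction $r<\infty$ when $n=3$ --- so this is essentially the same approach as the cited sources and I see no gap beyond the routine handling of the low-frequency block, where $|\xi|$ is non-smooth at the origin but the per-block gain $2^{j\lambda\alpha(r)}$ for $j\le 0$ makes the sum harmless.
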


\begin{Lemma}
\label{Lemma:3.2}
Let $r > 2$, and $T>0$.
If $q \geq 4$ and
	\[
	s > \frac{3}{4} + \frac{1}{2r},
	\]
then
$L^q(0,T;B_{r}^{s- 3/2 \cdot \alpha(r)}(\mathbb R^2))
\hookrightarrow L^4(0,T;L^\infty(\mathbb R^2))$.
\end{Lemma}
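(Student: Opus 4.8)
The plan is to factor the asserted embedding into an elementary inequality in the time variable and a Sobolev-type embedding in the space variable. Since $(0,T)$ has finite Lebesgue measure and $q\ge 4$, Hölder's inequality applied to the scalar function $t\mapsto\|f(t)\|_{L^\infty(\mathbb R^2)}$ yields $\|f\|_{L^4(0,T;L^\infty(\mathbb R^2))}\le T^{1/4-1/q}\,\|f\|_{L^q(0,T;L^\infty(\mathbb R^2))}$. Hence it suffices to establish the purely spatial embedding $B_r^{s-\frac32\alpha(r)}(\mathbb R^2)\hookrightarrow L^\infty(\mathbb R^2)$, apply it pointwise in $t$, and then take the $L^q$ norm in time.

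For the spatial step I would invoke the standard Besov embedding: the inhomogeneous space $B_r^\sigma(\mathbb R^n)$ embeds continuously into $L^\infty(\mathbb R^n)$ whenever $\sigma>n/r$, for any (finite) value of the fine index. This follows from the Littlewood--Paley decomposition $f=\sum_j \Delta_j f$, the Bernstein inequality $\|\Delta_j f\|_{L^\infty}\lesssim 2^{jn/r}\|\Delta_j f\|_{L^r}$, and summability of the geometric series $\sum_j 2^{j(n/r-\sigma)}$, together with the low-frequency part which is controlled by $\|f\|_{L^r}$.

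It then remains to check that the regularity index appearing in the statement lies strictly above the threshold $n/r$. Taking $n=2$ and recalling $\alpha(r)=\frac12-\frac1r$, one computes $s-\frac32\alpha(r)=s-\frac34+\frac3{2r}$, and the inequality $s-\frac34+\frac3{2r}>\frac2r$ is equivalent to $s>\frac34+\frac1{2r}$, which is precisely the hypothesis. Combining this with the temporal Hölder bound of the first paragraph gives the desired continuous inclusion, with embedding constant controlled by a power of $T$.

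I do not expect any genuine obstacle here; the proof is essentially bookkeeping of indices. The one point deserving attention is that the hypothesis $s>\frac34+\frac1{2r}$ is exactly the (strict) borderline condition for $B_r^{s-\frac32\alpha(r)}$ to embed into $L^\infty(\mathbb R^2)$ rather than into a weaker space such as $\mathrm{BMO}$; since the inequality is strict, a small amount of regularity remains in each dyadic block, which is what makes the series converge and the embedding into $L^\infty$ valid. An alternative would be to interpolate among admissible Strichartz pairs from Lemma \ref{Lemma:3.1}, but the direct argument above is cleaner and makes the role of the condition on $s$ transparent.
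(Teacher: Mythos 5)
Your proof is correct and is essentially the paper's own argument: the paper likewise reduces the lemma to the index inequality $s-\tfrac32\alpha(r)-\tfrac2r>0\Longleftrightarrow s>\tfrac34+\tfrac1{2r}$, i.e.\ the spatial embedding $B_r^{s-\frac32\alpha(r)}(\mathbb R^2)\hookrightarrow L^\infty(\mathbb R^2)$, combined with H\"older in time on the finite interval using $q\ge4$. You have merely written out the Littlewood--Paley/Bernstein details that the paper leaves implicit.
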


\begin{proof}
Since
	\[
	s - \frac{3}{2} \alpha(r) - \frac{2}{r} > 0
	\Longleftrightarrow
	s > \frac{3}{4} + \frac{1}{2r},
	\]
$L^q(0,T;B_{r}^{s- 3/2 \cdot \alpha(r)}(\mathbb R^2))
\hookrightarrow L^4(0,T;L^\infty(\mathbb R^2))$.
\end{proof}

\begin{proof}[Proof of Proposition \ref{Proposition:1.2}]
\quad \\
\textbf{Local well-posedness}
Let $(q_1,r_1)$ satisfy the condition of Lemma \ref{Lemma:3.2}, \eqref{eq:7},
and $q_1 > p-1$.
Let $q_2 = \infty$ and $r_2 = 2$.
Let $X^s(0,T) = L^\infty(0,T; H^s(\mathbb R^2))
\cap L^{q_1}(0,T;B_{r_1}^{s-3/2 \cdot \alpha(r_1)}(\mathbb R^2))$.
Let
	\[
	\Phi(u)(t)
	= U(t) u_0 - \int_0^t U(t-t') |u(t')|^{p-1} u(t') dt'.
	\]
Then, for $0 < T < 1$,
	\begin{align}
	\|\Phi(u)\|_{X^s(0,T)}
	&\leq \|u_0\|_{H^s(\mathbb R^2)}
	+ C \||u|^{p-1} u\|_{L^1(0,T;H^s(\mathbb R^2))}
	\label{eq:8}\\
	&\leq \|u_0\|_{H^s(\mathbb R^2)}
	+ C T^{1-(p-1)/q_1} \|u\|_{X^s(0,T)}^p,
	\nonumber
	\end{align}
and
	\begin{align*}
	&\|\Phi(u) - \Phi(v)\|_{X^s(0,T)}\\
	&\leq \| u_0 - v_0\|_{H^s(\mathbb R)}
	+ C \||u|^{p-1} u - |v|^{p-1} v\|_{L^1(0,T;H^s(\mathbb R^2))}
	\nonumber \\
	&\leq \| u_0 - v_0\|_{H^s(\mathbb R)}
	\nonumber \\
	&+ C T^{1-(p-1)/q_1} ( \|u\|_{X^s(0,T)} + \|v\|_{X^s(0,T)})^{p-1}
	\|u-v\|_{X^s (0,T)}
	\nonumber \\
	&+ C T^{1-(p-1)/q_1} ( \|u\|_{X^s(0,T)} + \|v\|_{X^s(0,T)})^{\max(1,p-1)}
	\|u-v\|_{X^s (0,T)}^{\min(1,p-1)}.
	\nonumber
	\end{align*}
This means if
$T < T_0
:= \big(
2^{1+\max(1,p-1)}C
( \|u_0\|_{H^s(\mathbb R^2)}^{p-1}
+ \|u_0\|_{H^s(\mathbb R^2)}^{\max(1,p-1)})
\big)^{- q_1/(q_1-p+1)}$,
then
$\Phi$ is a map from
	\[
	B_{X^s(0,T)}(2 \| u_0 \|_{H^s(\mathbb R^2)})
	:= \{ f \in X^s(0,T) \ | \ \|f\|_{X^s(0,T)} \leq 2 \|u_0\|_{H^s(\mathbb R^2)} \}.
	\]
to itself.
Moreover, if $p \geq 2$, $\Phi$ is a contraction map in $X^s(0,T)$.
If $p < 2$,
since for $z_1, z_0 \in \mathbb C$ with $|z_1| > |z_0|$,
	\begin{align*}
	&\big| |z_1|^{p-1} z_1 - |z_0|^{p-1} z_0 \big|\\
	&\leq |z_1|^{p-1} | z_1 - z_0|
	+ \frac{1}{p-1} \int_0^1 (\theta |z_1|+(1-\theta)|z_0|)^{p-2} |z_0| |z_1 -z_0|\\
	&\leq |z_1|^{p-1} | z_1 - z_0|
	+ \frac{1}{p-1} |z_0|^{p-1}|z_1 -z_0|,
	\end{align*}
then
	\begin{align}
	&\|\Phi(u) - \Phi(v)\|_{L^\infty(0,T; L^2(\mathbb R^2))}
	\label{eq:9}\\
	&\leq \| u_0 - v_0\|_{L^2(\mathbb R^2)}
	+ C \||u|^{p-1} u - |v|^{p-1} v\|_{L^1(0,T;L^2(\mathbb R^2))}
	\nonumber\\
	&\leq \| u_0 - v_0\|_{L^2(\mathbb R^2)}
	\nonumber\\
	&+ C T^{1-(p-1)/q_1} ( \|u\|_{X^s(0,T)} + \|v\|_{X^s(0,T)})^{p-1}
	\|u-v\|_{L^\infty (0,T; L^2(\mathbb R^2))}
	\nonumber
	\end{align}
and therefore $\Phi$ is a contraction map in $L^\infty(0,T;L^2(\mathbb R^2))$.
Let $u_1 \in B_{X^s(0,T)}(2 \| u_0 \|_{H^s(\mathbb R^2)})$
and $u_k = \Phi(u_{k-1})$ for $k \geq 2$.
Then
$(u_k)_{k=1}^\infty \subset B_{X^s(0,T)} (2 \| u_0 \|_{H^s(\mathbb R^2)})$
is a Cauchy sequence in $L^\infty(0,T;L^2(\mathbb R^2))$.
Let $u^\ast$ be the limit of $(u_k)_{k=1}^\infty$
in $L^\infty(0,T;L^2(\mathbb R^2))$.
Since
	\[
	\| |f|^{p-1} g \|_{L^{2/p}(\mathbb R^n)}
	\leq \| f \|_{L^{2}(\mathbb R^n)}^{p-1} \| g \|_{L^2(\mathbb R^n)}
	\]
for any $n \in \mathbb N$ and $f, g \in L^2(\mathbb R^n)$,
$\Phi(u_k) \to \Phi(u^\ast) $
in $L^\infty(0,T; L^2 (\mathbb R^2))$.
Therefore $u^\ast$ is a solution of \eqref{eq:4}.
Moreover, since
$X^s(0,T) \hookrightarrow L^\infty(0,T;H^s(\mathbb R^2))$,
$u^\ast$ is also in $L^\infty(0,T;H^s(\mathbb R^2))$.
By \eqref{eq:12},
	\[
	u^\ast \in L^{q_1}(0,T;B_{r_1}^{s-\frac{3}{2} \alpha(r_1)}(\mathbb R^2)).
	\]
If $s > 1$, by the Sobolev embedding,
for some $0 < \theta < 1$,
	\[
	\| u - v \|_{L^\infty(0,T;L^\infty(\mathbb R^2))}
	\lesssim \| u - v \|_{L^\infty(0,T;L^2(\mathbb R^2))}^\theta
	\| u - v \|_{L^\infty(0,T;H^1(\mathbb R^2))}^{1-\theta}
	\]
and therefore the solution map depends continuously on the initial data continuously
in $H^s(\mathbb R^2)$.
In the case where $s \leq 1$,
the continuous dependence of $\Phi$ may be shown as follows.
By \eqref{eq:9},
the solution map depends continuously on the initial data continuously
in $L^2(\mathbb R^2)$.
We define $s_3, s_4 > 0$ so that they satisfy the following:
$\max \bigg( \frac{3}{4} + \frac{1}{2r_1}, s_4-\frac{3}{4}(p-1) \bigg) < s_3 < s_4 < s$,
$r_3 = \frac{3}{2} \bigg( s_3-s_4+\frac{3}{4} \bigg)^{-1}$,
and $q_3 > p-1$,
where $(q_3,r_3)$ satisfy \eqref{eq:7}.
Then $B_{r_1}^{s_3-\frac{3}{2} \alpha(r_1)}(\mathbb R^2)
\hookrightarrow L^\infty(\mathbb R^2)$,
$B_{r_3'}^{s_3-\frac{3}{2} \alpha(r_3')}(\mathbb R^2)
= B_{r_3'}^{s_4}(\mathbb R^2)$,
$2 < r_3 < \frac{3}{2} \frac{4}{3(2-p)}$,
and $\frac{r_3(p-1)}{r_3-2} > 1$.
Moreover,
	\begin{align*}
	&\|\Phi(u) - \Phi(v)\|_{L^{q_1}(0,T;B_{r_1}^{s_3}(\mathbb R^2))}\\
	&\leq \| u_0 - v_0\|_{H^{s_3}(\mathbb R)}
	+ C \||u|^{p-1} u - |v|^{p-1} v\|_{L^{q_3'}(0,T;B_{r_3'}^{s_4}(\mathbb R^2))}.
	\end{align*}
For $z_j \in \mathbb C$ with $j=1,2,3,4$,
with $w_1=z_2-z_1$ and $w_2 = z_4-z_3$,
	\begin{align*}
	&|z_4|^{p-1} z_4 - |z_3|^{p-1} z_3 - |z_2|^{p-1} z_2 + |z_1|^{p-1} z_1\\
	&= \frac {p+1} 2 \int_0^1 |z_3 + \theta w_2 |^{p-1} d \theta w_2
	- \frac {p+1} 2 \int_0^1 |z_1 + \theta w_1 |^{p-1} d \theta w_1\\
	&+ \frac {p-1} 2 \int_0^1 |z_3 + \theta w_2 |^{p-3} (z_3 + \theta w_2 )^2 d \theta
	\overline{w_2}\\
	&- \frac {p-1} 2 \int_0^1 |z_1 + \theta w_1 |^{p-3} ( z_1 + \theta w_1 )^2 d \theta
	\overline{w_1}.
	\end{align*}
Then
	\begin{align*}
	&\bigg| \int_0^1 |z_3 + \theta w_2 |^{p-1} d \theta w_2
	- \int_0^1 |z_1 + \theta w_1 |^{p-1} d \theta w_1 \bigg|\\
	&\leq \int_0^1 |z_3 + \theta w_2 |^{p-1}
	d \theta | w_2 - w_1 |\\
	&+ \int_0^1 \big| |z_3 + \theta w_2 |^{p-1} - |z_1 + \theta w_1 |^{p-1} \big|
	d \theta | w_1 |\\
	&\leq (|z_3|^{p-1}+|z_4|^{p-1})|w_2-w_1|
	+ |z_3-z_1|^{p-1} |w_1| + \frac{1}{p} |w_1| |w_2-w_1|^{p-1}\\
	&\leq (|z_3|^{p-1}+|z_4|^{p-1})|w_2-w_1|
	+ \frac{p+1}{p} |w_1| |z_3-z_1|^{p-1} + \frac{1}{p} |w_1| |z_4-z_2|^{p-1}.
	\end{align*}
Similarly
	\begin{align*}
	&\bigg| \int_0^1 |z_3 + \theta w_2 |^{p-3} (z_3 + \theta w_2 )^2 d \theta
	\overline{w_2}
	- \int_0^1 |z_1 + \theta w_1 |^{p-3} ( z_1 + \theta w_1 )^2 d \theta
	\overline{w_1} \bigg|\\
	&\lesssim (|z_3|^{p-1}+|z_4|^{p-1})|w_2-w_1|
	+ |w_1| |z_3-z_1|^{p-1} + |w_1| |z_4-z_2|^{p-1},
	\end{align*}
since
	\begin{align*}
	| |z_2|^{p-3} z_2^2 - |z_1|^{p-3} z_1^2 |
	&\lesssim \int_0^1 |z_1^2 + \theta (z_2-z_1)|^{p-2} d \theta |z_2-z_1|\\
	&= \int_0^1 |z_1^2 + \theta (z_2-z_1)|^{p-2} d \theta |z_2-z_1|\\
	&\leq \int_0^1 ||z_1| - \theta |z_2-z_1||^{p-2} d \theta |z_2-z_1|\\
	&= \frac{1}{2-p} ( |z_1|^{p-1} - ||z_1| - |z_2-z_1| |^{p-1} )\\
	&\leq |z_2-z_1|^{p-1}.
	\end{align*}
Therefore,
	\begin{align*}
	&\big\| |u(t,\cdot+h)|^{p-1} u(t,\cdot+h) - |v(t,\cdot+h)|^{p-1} v(t,\cdot+h)\\
	&\quad
	- |u(t)|^{p-1} u(t) + |v(t)|^{p-1} v(t) \big\|_{L^{r_3'}(\mathbb R^2)}\\
	&= \big\| |u(t,\cdot+h)|^{p-1} u(t,\cdot+h)- |u(t)|^{p-1} u(t)\\
	&\quad - |v(t,\cdot+h)|^{p-1} v(t,\cdot+h) + |v(t)|^{p-1} v(t)
	\big\|_{L^{r_3'}(\mathbb R^2)}\\
	&\leq 4 \| u(t) \|_{L^{\frac{2r_3(p-1)}{r_3-2}}(\mathbb R^2)}^{p-1}
	\| u(t,\cdot+h) - v(t,\cdot+h) - u(t) + v(t) \|_{L^{2}(\mathbb R^2)}\\
	&+ \frac{2(p+2)}{p} \| v(t,\cdot+h) - v(t) \|_{L^{2}(\mathbb R^2)}
	\| u(t) - v(t) \|_{L^{\frac{2r_3(p-1)}{r_3-2}}(\mathbb R^2)}^{p-1}
	\end{align*}
and this means
	\begin{align*}
	&\||u|^{p-1} u - |v|^{p-1} v\|_{L^{q_3'}(0,T;B_{r_3'}^{s_4}(\mathbb R^2))}\\
	&\lesssim \big\| \| u \|_{L^{\frac{2r_3(p-1)}{r_3-2}}(\mathbb R^2)}^{p-1}
	\| u - v \|_{H^{s_4}(\mathbb R^2)}
	+ \| v \|_{H^{s_4}(\mathbb R^2)}
	\| u - v \|_{L^{\frac{2r_3(p-1)}{r_3-2}}(\mathbb R^2)}^{p-1} \|_{L^{q_3'}(0,T)}\\
	&\leq \big\|
	\| u \|_{L^{\infty}(\mathbb R^2)}^{p-1-\frac{r_3-2}{r_3}}
	\| u \|_{L^{2}(\mathbb R^2)}^{\frac{r_3-2}{r_3}}
	\| u - v \|_{H^{s_4}(\mathbb R^2)}\|_{L^{q_3'}(0,T)}\\
	&+ \big\| \| v \|_{H^{s_4}(\mathbb R^2)}
	\| u - v \|_{L^{\infty}(\mathbb R^2)}^{p-1-\frac{r_3-2}{r_3}}
	\| u - v \|_{L^{2}(\mathbb R^2)}^{\frac{r_3-2}{r_3}}
	\|_{L^{q_3'}(0,T)}\\
	&\leq
	\| u \|_{L^{q_3'(p-1-\frac{r_3-2}{r_3})}(0,T;L^{\infty}(\mathbb R^2))}%
	^{p-1-\frac{r_3-2}{r_3}}
	\| u \|_{L^{\infty}(0,T;L^{2}(\mathbb R^2))}^{\frac{r_3-2}{r_3}}
	\| u - v \|_{L^\infty(0,T;H^{s_4}(\mathbb R^2))}\\
	&+ \| v \|_{L^\infty(0,T;H^{s_4}(\mathbb R^2))}
	\| u - v \|_{L^{q_3'(p-1-\frac{r_3-2}{r_3})}(0,T;L^{\infty}(\mathbb R^2))}%
	^{p-1-\frac{r_3-2}{r_3}}
	\| u - v \|_{L^\infty(0,T;L^{2}(\mathbb R^2))}^{\frac{r_3-2}{r_3}}.
	\end{align*}
Since
	\begin{align*}
	&\| \Phi(u) - \Phi(v) \|_{L^\infty(0,T;H^s(\mathbb R^2))}\\
	&\lesssim \| u_0 - v_0 \|_{H^s(\mathbb R^2)}
	+ ( \|u_0\|_{H^s(\mathbb R^2)} + \|v_0\|_{H^s(\mathbb R^2)} )
	\| u - v \|_{L^{p-1}(0,T;L^\infty(\mathbb R^2))}^{p-1},
	\end{align*}
the solution map is also continuously dependent
in $L^\infty(0,T;H^s(\mathbb R^2))$.

\textbf{Global well-posedness}
When $s=1$ and when $s=2$ and $p=3$,
a priori estimates shows the global well-posedness
by the blow-up alternative argument.
Here we consider the case where $p=3$ and $1 < s < 2$.
Let $[a]$ be the highest integer which is less than or equal to $a$.
Let $T_1 = \min(1,T_0)$.
By using the $H^1$ a priori estimate,
for any $t > 0$,
	\begin{align*}
	\|u\|_{L^4(0,t;L^\infty(\mathbb R^2))}
	&\leq \sum_{k=0}^{[t/T_1]+1} \|u\|_{L^4(k T_1, (k+1)T_1; L^\infty(\mathbb R^2))}\\
	&\leq \sum_{k=0}^{[t/T_1]+1} \|u\|_{X^1(kT_1,(k+1)T_1)}\\
	&\leq 2 T_1^{-1} (1+t) \|u_0\|_{H^1(\mathbb R^2)}.
	\end{align*}
Then by using Proposition \ref{Proposition:2.3},
	\begin{align*}
	\|u(t)\|_{\dot H^s(\mathbb R^2)}^2
	&\lesssim \|u_0\|_{H^s(\mathbb R^2)}^2
	+ \int_{0}^{t} \|u(t')\|_{L^{\infty}(\mathbb R^n)}^{p-1}
	\| u(t') \|_{\dot H^s(\mathbb R^2)}^2 dt\\
	&\lesssim \|u_0\|_{H^s(\mathbb R^2)}^2
	+ \|u(t')\|_{L^{2(p-1)}(0,t;L^{\infty}(\mathbb R^2))}^{p-1}
	\| u \|_{L^4(0,t;\dot H^s(\mathbb R^2))}^2\\
	&\lesssim \|u_0\|_{H^s(\mathbb R^2)}^2
	+ \|u_0\|_{H^1(\mathbb R^2)}^{p-1} (1+t)^{p-1} t^{\frac{1}{2} - \frac{p-1}{4}}
	\| u \|_{L^4(0,t;\dot H^s(\mathbb R^2))}^2.
	\end{align*}
This shows
	\[
	\|u(t)\|_{\dot H^s(\mathbb R^2)}^4
	\lesssim \|u_0\|_{H^s(\mathbb R^2)}^4
	+ \|u_0\|_{H^1(\mathbb R^2)}^{2(p-1)} (1+t)^{2(p-1)} t^{1 - \frac{p-1}{2}}
	\| u \|_{L^4(0,t;\dot H^s(\mathbb R^2))}^4.
	\]
By the Gronwall inequality,
	\[
	\|u(t)\|_{H^s(\mathbb R^2)}
	\lesssim \|u_0\|_{H^s(\mathbb R^2)}
	\exp \{ C \|u_0\|_{H^1(\mathbb R^2)}^{2(p-1)} (1+t)^{2(p-1)} t^{2-\frac{p-1}{2}} \}.
	\]
This shows the global well-posedness in $H^s(\mathbb R^2)$ setting.
\end{proof}

\subsection{The case where $n \geq 3,$ global $H^1$ existence result}
In the case where $n \geq 3$,
the Strichartz estimate Lemma \ref{Lemma:3.1} doesn't seem sufficient
to obtain a uniform control of solutions in the $H^1(\mathbb R^3)$ setting.
So here, we consider radial data and use the following Strauss lemma.

\begin{Lemma}[{\cite[Theorems 1,2]{SS}}, {\cite[Proposition 1]{CO}}]
\label{Lemma:3.3}
Let $n \geq 2$ and let $1/2 < s < n/2$.
Then
	\[
	\| |x|^{\frac{n}{2} - s} f \|_{L_{\mathrm{rad}}^\infty(\mathbb R^n)}
	\lesssim \| f \|_{\dot H_{\mathrm{rad}}^s(\mathbb R^n)}.
	\]
\end{Lemma}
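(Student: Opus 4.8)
The plan is to prove the estimate for radial Schwartz functions and then extend by density, the point being that radiality lets one average the Riesz kernel over spheres, which is what makes the resulting one–dimensional kernel integrable in the sharp range $1/2<s<n/2$. First I would set $g=(-\Delta)^{s/2}f$, which is radial with $\|g\|_{L^2(\mathbb R^n)}=\|f\|_{\dot H^s(\mathbb R^n)}$, and recover $f$ from $g$ by the Riesz potential (legitimate since $0<s<n$):
\[
f(x)=c_{n,s}\int_{\mathbb R^n}|x-y|^{-(n-s)}\,g(y)\,dy .
\]
Fixing $x$ with $|x|=R$, writing $g(y)=g(|y|)$, and performing the angular integration first, this becomes
\[
f(x)=c_{n,s}\int_0^\infty g(\rho)\,K(R,\rho)\,\rho^{n-1}\,d\rho,\qquad
K(R,\rho)=\int_{S^{n-1}}|R\nu-\rho\omega|^{-(n-s)}\,d\omega,
\]
for an arbitrary $\nu\in S^{n-1}$, and the kernel satisfies the exact scaling $K(R,\rho)=R^{s-n}\,k(\rho/R)$ with $k(t):=K(1,t)$.

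Next I would apply the Cauchy--Schwarz inequality in the measure $\rho^{n-1}\,d\rho$, using $\int_0^\infty|g(\rho)|^2\rho^{n-1}\,d\rho=|S^{n-1}|^{-1}\|f\|_{\dot H^s(\mathbb R^n)}^2$ and the change of variable $\rho=Rt$, to obtain
\[
|f(x)|\lesssim R^{s-n/2}\,\|f\|_{\dot H^s(\mathbb R^n)}\left(\int_0^\infty k(t)^2 t^{n-1}\,dt\right)^{1/2}.
\]
Taking the supremum over $x$ then reduces the whole lemma to the single claim that $\int_0^\infty k(t)^2 t^{n-1}\,dt<\infty$ under the hypothesis $1/2<s<n/2$.

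To finish I would estimate $k(t)=\int_{S^{n-1}}|\nu-t\omega|^{-(n-s)}\,d\omega$ in three regimes. For $t$ bounded away from $1$ one has $|\nu-t\omega|\sim\max(1,t)$, so $k$ is bounded near $t=0$ (harmless, as $n-1>-1$) and $k(t)\sim t^{-(n-s)}$ as $t\to\infty$, whence $k(t)^2 t^{n-1}\sim t^{-(n+1-2s)}$, integrable at infinity exactly when $s<n/2$. Near $t=1$ I would parametrize $\omega$ by its polar angle $\theta$ from $\nu$ and use $|\nu-t\omega|^2=(1-t)^2+4t\sin^2(\theta/2)\approx(1-t)^2+\theta^2$ together with the rescaling $\theta=|1-t|\phi$, which gives $k(t)\approx|1-t|^{s-1}\int_0^{c/|1-t|}(1+\phi^2)^{-(n-s)/2}\phi^{n-2}\,d\phi$; the $\phi$-integral stays bounded when $s<1$, grows like $\log(2/|1-t|)$ when $s=1$, and like $|1-t|^{-(s-1)}$ when $s>1$, so in each case $k(t)^2$ is integrable across $t=1$ provided $2(s-1)>-1$, i.e. $s>1/2$, and this lower bound is sharp since for $s\le 1/2$ the matching lower estimate $k(t)\gtrsim|1-t|^{s-1}$ makes $k^2$ non-integrable there. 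Combining the three regimes yields $\int_0^\infty k(t)^2 t^{n-1}\,dt<\infty$ for $1/2<s<n/2$, as required. I expect the main obstacle to be precisely the analysis near $t=1$, where the Riesz kernel is singular on the sphere $\{|y|=R\}$ and the angular rescaling must be carried out carefully; this is the step that produces the restriction $s>1/2$. As a sanity check, the case $s=1$ of this computation collapses to the elementary Strauss bound $|f(r)|\lesssim r^{-(n-2)/2}\|\nabla f\|_{L^2(\mathbb R^n)}$, obtained directly from $f(r)=-\int_r^\infty f'(\rho)\,d\rho$ and Cauchy--Schwarz.
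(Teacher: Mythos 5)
Your proof is correct, and it is worth noting that the paper itself gives no proof of this lemma: it is quoted as a known result from Sickel--Skrzypczak and Cho--Ozawa, so your argument is a self-contained substitute rather than a variant of something in the text. Your route --- writing $f=c_{n,s}I_s g$ with $g=(-\Delta)^{s/2}f$, averaging the Riesz kernel over spheres, and reducing the inequality by Cauchy--Schwarz to the finiteness of $\int_0^\infty k(t)^2t^{n-1}\,dt$ --- is a clean physical-space argument, and your three-regime analysis of $k$ is right: boundedness near $t=0$, decay $t^{-(n-s)}$ at infinity giving the constraint $s<n/2$, and the angular rescaling near $t=1$ giving $k(t)\approx|1-t|^{s-1}$ for $s<1$ (with the logarithmic and bounded cases for $s=1$ and $s>1$), which is square-integrable across $t=1$ exactly when $s>1/2$. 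The cited proofs reach the same conclusion by different means: Cho--Ozawa work on the Fourier side, representing a radial function through the Hankel transform $f(r)=c\int_0^\infty \hat f(\rho)(r\rho)^{-(n-2)/2}J_{(n-2)/2}(r\rho)\rho^{n-1}d\rho$ and applying Cauchy--Schwarz with the standard Bessel bounds (the endpoints $s=1/2$ and $s=n/2$ then arise from the oscillation/decay of $J_\nu$ at infinity and its vanishing at the origin), while Sickel--Skrzypczak argue through decompositions of radial subspaces of Besov--Lizorkin--Triebel classes and obtain more general statements. Your version is more elementary and self-contained; the only points you should make explicit if you write it up are (i) the validity of the inversion $f=c_{n,s}I_s[(-\Delta)^{s/2}f]$ for radial Schwartz $f$ when $0<s<n$, and (ii) the passage to the limit in the density step, e.g.\ by extracting an a.e.\ convergent subsequence from convergence in $L^{2n/(n-2s)}(\mathbb R^n)$, so that the pointwise bound survives.
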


Since solutions are not uniformly controlled at the origin
by the Strauss lemma above,
we apply the following weighted Strichartz estimate:

\begin{Lemma}[{\cite[Propositions 1.2 and 1.3]{BGV}}]
\label{Lemma:3.4}
Let $n \in \mathbb N$.
Let $\delta > 0$ and $[x]_{\delta} = |x|^{1-\delta} + |x|^{1+\delta}$.
The for any $q_1 \in \lbrack 2,\infty)$ and $q_2 \in (2,\infty)$,
	\begin{align*}
	\| [x]_{\delta}^{-1/q_1} U(t) f \|_{L^{q_1}(\mathbb R;L^2(\mathbb R^n))}
	&\lesssim \| f \|_{L^2(\mathbb R^n)},\\
	\bigg\| [x]_{\delta}^{-1/q_1} \int_0^t U(t-t') F(t') dt'
	\bigg\|_{L^{q_1}(0,T;L^2(\mathbb R^n))}
	&\lesssim \| [x]_{\delta}^{1/q_2} F \|_{L^{q_2'}(0,T;L^2(\mathbb R^n))}.
	\end{align*}
\end{Lemma}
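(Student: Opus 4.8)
The plan is to deduce the homogeneous estimate from its endpoint case $q_1=2$, reach the full range $q_1\in(2,\infty)$ by interpolation with the trivial case $q_1=\infty$, and then obtain the retarded inhomogeneous estimate by duality together with the Christ--Kiselev lemma. Throughout, the point of the two--parameter weight $[x]_\delta=|x|^{1-\delta}+|x|^{1+\delta}$, rather than the homogeneous weight $|x|$, is that the homogeneous case at $q_1=2$ is scale critical: it is essentially the smoothing estimate $\|\,|x|^{-1/2}U(t)f\,\|_{L^2(\mathbb R;L^2)}\lesssim\|f\|_{L^2}$, and the exponents $1\mp\delta$ supply just enough extra integrability at the origin and decay at infinity to turn a logarithmic divergence into a bounded power, uniformly over frequencies.

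For $q_1=2$ one writes, up to an absolute constant, $U(t)f(x)=\int_0^\infty e^{-it\rho}\,\rho^{n-1}\,\mathcal E_\rho[\hat f(\rho\,\cdot)](x)\,d\rho$, where $\mathcal E_\rho g(x)=\int_{S^{n-1}}e^{i\rho x\cdot\omega}g(\omega)\,d\omega$ is the extension operator for the sphere of radius $\rho$ and $\hat f(\rho\,\cdot)$ is the restriction of $\hat f$ to that sphere. Plancherel in $t$ then gives
\[
\|[x]_\delta^{-1/2}U(t)f\|_{L^2(\mathbb R\times\mathbb R^n)}^2 = 2\pi\int_0^\infty\rho^{2n-2}\,\|[x]_\delta^{-1/2}\mathcal E_\rho[\hat f(\rho\,\cdot)]\|_{L^2(\mathbb R^n)}^2\,d\rho ,
\]
while $\|f\|_{L^2}^2=\int_0^\infty\rho^{n-1}\|\hat f(\rho\,\cdot)\|_{L^2(S^{n-1})}^2\,d\rho$, so it suffices to prove the uniform bound $\|[x]_\delta^{-1/2}\mathcal E_\rho g\|_{L^2(\mathbb R^n)}\lesssim\rho^{-(n-1)/2}\|g\|_{L^2(S^{n-1})}$ for all $\rho>0$. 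Since $\mathcal E_\rho g(x)=(\mathcal E_1 g)(\rho x)$, rescaling turns this into a weighted estimate for $\mathcal E_1$ with the distorted weight $[y/\rho]_\delta^{-1}$, into which one feeds the classical dyadic $L^2$ restriction/extension bound $\|\mathcal E_1 g\|_{L^2(\{|x|\sim R\})}\lesssim\langle R\rangle^{1/2}\|g\|_{L^2(S^{n-1})}$ (uniform in $R>0$; for $R\lesssim1$ this merely reflects boundedness of $\mathcal E_1 g$ near the origin, which is harmless as $[x]_\delta^{-1}$ is locally integrable for $0<\delta<1$). Summing the dyadic pieces against $[y/\rho]_\delta^{-1}$ and collecting powers of $\rho$, the only delicate region is $1\lesssim|y|\lesssim\rho$, where the symmetric exponents $1\mp\delta$ are exactly what replaces the $\log\rho$ of the bare weight $|x|^{-1}$ by a bounded power of $\rho$. (For $n=1$ the sphere degenerates to two points and the statement is immediate from $[x]_\delta^{-1}\in L^1(\mathbb R)$.)

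For $q_1\in(2,\infty)$ one applies Stein interpolation to the analytic family $f\mapsto[x]_\delta^{-z}U(t)f$ on the strip $0\le\mathrm{Re}\,z\le1/2$: on $\mathrm{Re}\,z=0$ it maps $L^2(\mathbb R^n)$ into $L^\infty(\mathbb R;L^2(\mathbb R^n))$ by unitarity of $U(t)$ and $|[x]_\delta^{-iy}|=1$, and on $\mathrm{Re}\,z=1/2$ it maps into $L^2(\mathbb R;L^2(\mathbb R^n))$ by the endpoint estimate; since $[L^2(\mathbb R;L^2),L^\infty(\mathbb R;L^2)]_\theta=L^{2/\theta}(\mathbb R;L^2)$, the choice $z=\theta/2=1/q_1$ gives the claim. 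The inhomogeneous estimate then follows by duality and composition: dualizing the homogeneous estimate at exponent $q_2$ yields $\|\int_{\mathbb R}U(-s)F(s)\,ds\|_{L^2}\lesssim\|[x]_\delta^{1/q_2}F\|_{L^{q_2'}(\mathbb R;L^2)}$, and composing with the homogeneous estimate at $q_1$ gives the full--line bound with $\int_{\mathbb R}$ in place of $\int_0^t$; because $q_2\in(2,\infty)$ forces $q_2'<2\le q_1$, the (vector-valued) Christ--Kiselev lemma upgrades this to the retarded bound, after which restriction to $(0,T)$ is free. The hypothesis $q_2>2$ is present precisely so that $q_2'<q_1$ and Christ--Kiselev applies.

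The only real obstacle is the endpoint $q_1=2$, and within it the uniformity in the frequency $\rho$: one must check that the two--parameter weight $[x]_\delta$ is calibrated tightly enough that the $\rho$--scaling of the weighted extension norm produces no logarithmic loss. Once that is in hand, the interpolation and the Christ--Kiselev step are routine.
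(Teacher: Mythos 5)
The first thing to say is that the paper contains no proof of Lemma \ref{Lemma:3.4} to compare against: it is imported verbatim from \cite[Propositions 1.2 and 1.3]{BGV}, and the authors simply cite it. Judged on its own terms, your argument is correct and is the standard route to such weighted smoothing estimates (and, in substance, the route of the cited reference): Plancherel in $t$ reduces the endpoint $q_1=2$ to a uniform-in-$\rho$ weighted $L^2$ bound for the extension operator of the sphere of radius $\rho$; the two exponents $1\mp\delta$ are exactly what removes the $\log\rho$ that the scale-invariant weight $|x|^{-1}$ would produce on the dyadic range $1\lesssim|y|\lesssim\rho$; the non-endpoint $q_1$ follow by interpolation against mass conservation (a direct H\"older argument in $x$ and then in $t$ does the job without invoking Stein interpolation); and the retarded inhomogeneous bound follows by duality, composition, and Christ--Kiselev, the hypothesis $q_2>2$ being precisely what forces $q_2'<2\le q_1$ so that Christ--Kiselev applies.

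Two steps deserve more care than you give them, though neither is a gap in the strategy. First, in the region $|y|\lesssim 1$ your appeal to local integrability of $[x]_\delta^{-1}$ is too quick: after rescaling the relevant weight is $[\,|y|/\rho\,]_\delta^{-1}$, and what is needed is the uniform bound $\int_{|y|\le 1}[\,|y|/\rho\,]_\delta^{-1}\,dy\lesssim\rho$ for all $\rho>0$; this does hold (one gets $O(\rho^{1-\delta})$ for $\rho\ge1$ and $O(\rho^{\min(n,1+\delta)})$ for $\rho\le1$), but only after tracking the $\rho$-dependence rather than invoking integrability alone. Second, the dyadic bound $\|\mathcal E_1 g\|_{L^2(|x|\sim R)}\lesssim\langle R\rangle^{1/2}\|g\|_{L^2(S^{n-1})}$ is not summable against $(R/\rho)^{-(1-\delta)}$ as $R\to0$; you need the elementary improvement $\|\mathcal E_1 g\|_{L^2(|x|\sim R)}\lesssim R^{n/2}\|g\|_{L^2(S^{n-1})}$ for $R\lesssim1$, which is what your parenthetical about boundedness of $\mathcal E_1 g$ near the origin implicitly supplies and should be stated as such.
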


\begin{proof}[Proof of Proposition \ref{Proposition:1.3}]
By using the uniform $H^1(\mathbb R^n)$ control obtained in \eqref{eq:5},
we reduce the proof to the local well-posedness in $H^1(\mathbb R^n)$.
Let $\delta > 0$, $1/2 < s < 1$, and $2 < q_1, q_2 < \infty$ satisfy
	\begin{align}
	- (p-1) \bigg( \frac n 2 - s \bigg) + \frac{1-\delta}{q_2}
	&= - \frac{1-\delta}{q_1}.
	\label{eq:10}
	\end{align}
We remark that there exist $\delta, q_1, q_2,r$
if $1 < p < 1 + 2/(n-2)$ since,
	\begin{align*}
	(p-1) \bigg( \frac n 2 - s \bigg) < 1
	\Longrightarrow
	p
	< 1 + \frac{2}{n-2s}
	< 1 + \frac{2}{n-2}.
	\end{align*}
We define the norm $Y^1(T)$ as
	\begin{align*}
	\| u \|_{Y^1(T)}
	&= \|u\|_{L^\infty(0,T;H_{\mathrm{rad}}^1(\mathbb R^n))}\\
	&+ \|[x]_{\delta}^{-1/q_1} u\|_{L^{q_1}(0,T;L_{\mathrm{rad}}^2(\mathbb R^n))}
	+ \|[x]_{\delta}^{-1/q_1} \nabla u\|_{L^{q_1}(0,T;L_{\mathrm{rad}}^2(\mathbb R^n))}.
	\end{align*}
Let $\psi \in \mathcal S (\mathbb R^n)$ satisfy
	\[
	\psi(x) =
	\begin{cases}
	1 & \mathrm{if}\quad |x| \leq 1,\\
	0 & \mathrm{if}\quad |x| \geq 2.
	\end{cases}
	\]
Then by Lemmas \ref{Lemma:3.3} and \ref{Lemma:3.4}
and \eqref{eq:10},
	\begin{align*}
	&\| \Phi (u) \|_{Y^1(T)}\\
	&\lesssim \| u_0 \|_{H_{\mathrm{rad}}^1(\mathbb R^n)}
	+ \bigg\| [x]_{\delta}^{-1/q_1}
	\int_0^t U(t-t') \big( \psi |u(t')|^{p-1} u(t') \big) dt'
	\bigg\|_{Y^1(T)}\\
	&+ \bigg\| [x]_{\delta}^{-1/q_1} \int_0^t U(t-t')
	\big( (1-\psi)|u(t')|^{p-1} u(t') \big) dt' \bigg\|_{Y^1(T)}\\
	&\lesssim \| u_0 \|_{H_{\mathrm{rad}}^1(\mathbb R^n)}\\
	&+ \| |x|^{- (p-1) ( \frac n 2 - s ) + \frac{1-\delta}{q_2}}
	||x|^{\frac n 2 - s} u|^{p-1} u
	\|_{L^{q_2'}(0,T;L_{\mathrm{rad}}^2(|x|\leq 2))}\\
	&+ \| |x|^{- (p-1) ( \frac n 2 - s ) + \frac{1-\delta}{q_2}}
	||x|^{\frac n 2 -s}u|^{p-1} \nabla u
	\|_{L^{q_2'}(0,T;L_{\mathrm{rad}}^2(|x|\leq 2))}\\
	&+ \| |u|^{p-1}u \|_{L^1(0,T;L_{\mathrm{rad}}^2(|x|>1))}
	+ \| \nabla (|u|^{p-1}u) \|_{L^1(0,T;L_{\mathrm{rad}}^2(|x|>1))}\\
	&\lesssim \| u_0 \|_{H_{\mathrm{rad}}^1(\mathbb R^n)}
	+ T^{1-\frac{1}{q_1}-\frac{1}{q_2}}
	\| u \|_{Y^1(T)}^{p}
	\end{align*}
and therefore for some $T$ and $R$,
$\Phi$ is a map from $B_{Y^1(T)}(R)$
into itself.
Moreover,
	\begin{align}
	&\| \Phi(u) - \Phi(v) \|_{Y^1(T)}
	\label{eq:11}\\
	&\lesssim \| u_0 - v_0 \|_{H_{\mathrm{rad}}^1(\mathbb R^n)}
	\nonumber\\
	&+ \| |x|^{- \frac{1-\delta}{q_1}}
	(||x|^{\frac n 2 - s} u|^{p-1} - ||x|^{\frac n 2 - s} v|^{p-1})
	(|\nabla u| + |u|) \|_{L^{q_2'}(0,T;L_{\mathrm{rad}}^2(|x|\leq 2))}
	\nonumber\\
	&+ \| |x|^{- \frac{1-\delta}{q_1}} | |x|^{\frac n 2 - s} v|^{p-1}
	(|\nabla (u-v)| + |u-v|) \|_{L^{q_2'}(0,T;L_{\mathrm{rad}}^2(|x| \leq 2))}
	\nonumber\\
	&+ \| (| |x|^{\frac n 2 - s} u|^{p-1}
	- | |x|^{\frac n 2 -s} v|^{p-1}) |x|^{-\frac{1+\delta}{q_1}} (|\nabla u| + |u|)
	\|_{L^{1}(0,T;L_{\mathrm{rad}}^2(|x| > 1))}
	\nonumber\\
	&+ \| | |x|^{\frac n 2 - s} v|^{p-1}
	|x|^{-\frac{1+\delta}{q_1}} (|\nabla (u-v)| + |u-v|)
	\|_{L^{1}(0,T;L_{\mathrm{rad}}^2(|x| > 1))}.
	\nonumber
	\end{align}
Then for $ p \geq 2$, $\Phi$ is a contraction map on $B_{Y^1(T)}(R)$.
Similarly, for $1 < p < 2$,
we define the auxiliary norm $Y^0(T)$ as
	\[
	\| u \|_{Y^0(T)}
	= \|u\|_{L^\infty(0,T;L_{\mathrm{rad}}^2(\mathbb R^n))}
	+ \|[x]_{\delta}^{-1/q_1} u\|_{L^q(0,T;L_{\mathrm{rad}}^2(\mathbb R^n))}.
	\]
Then for $1 < p < 2$,
	\begin{align*}
	&\| (\Phi(u) - \Phi(v) ) \|_{Y^0(T)}\\
	&\lesssim \| u_0 - v_0 \|_{L^2(\mathbb R^n)}\\
	&+ \Big\| [x]_{\delta}^{-1/q_1}\big( \big| |x|^{\frac n 2 - s} v \big|
	+ \big| |x|^{\frac n 2 - s} v \big| \big)^{p-1}
	|u-v| \|_{L^{q_2'}(0,T;L_{\mathrm{rad}}^2(|x| \leq 2))}\\
	&+ \Big\| \big( \big| |x|^{\frac n 2 - s} v \big|
	+ \big| |x|^{\frac n 2 - s} v \big| \big)^{p-1}
	|u-v| \|_{L^{1}(0,T;L_{\mathrm{rad}}^2(|x| > 1))}\\
	&\lesssim T^{1-\frac{1}{q_2}}
	(\|u\|_{Y^1(T)} + \|v\|_{Y^1(T)})^{p-1}
	\| u - v \|_{Y^0(T)}.
	\end{align*}
and therefore $\Phi$ is a contraction map in
$Y^0(T)$ for some $T$ and $R$ and therefore
we have a unique solution to \eqref{eq:1} in $Y^1(T)$.
Moreover,
by Lemma \ref{Lemma:3.3} and \eqref{eq:11},
with some $0 < \theta < 1$,
	\begin{align*}
	&\| u - v \|_{Y^1(T)}\\
	&\lesssim \|u_0-v_0\|_{H_{\mathrm{rad}}^1(\mathbb R^n)}
	+ T^{1-\frac{1}{q_1}-\frac{1}{q_2}}
	(\| u \|_{Y^1(T)} + \| v \|_{Y^1(T)} )^{p-1} \| u - v \|_{Y^1(T)}\\
	&+ T^{1-\frac{1}{q_1}-\frac{1}{q_2}}
	(\| u \|_{Y^1(T)} + \| v \|_{Y^1(T)} )
	\big\| |x|^{\frac n 2 - s}(u - v)
	\big\|_{L^\infty(0,T;L_{\mathrm{rad}}^\infty(\mathbb R^n)}^{p-1}\\
	&\lesssim \|u_0-v_0\|_{H_{\mathrm{rad}}^1(\mathbb R^n)}
	+ T^{1-\frac{1}{q_1}-\frac{1}{q_2}}
	(\| u \|_{Y^1(T)} + \| v \|_{Y^1(T)} )^{p-1} \| u - v \|_{Y^1(T)}\\
	&+ T
	(\| u \|_{Y^1(T)} + \| v \|_{Y^1(T)} )
	\big\| u - v \big\|_{Y^0(T)}^{p-1}
	\end{align*}
and therefore $\| u - v \|_{Y^1(T)} \to 0$
as $\| u_0 - v_0\|_{H_{\mathrm{rad}}^1(\mathbb R^n)} \to 0$.
\end{proof}

\subsection{3 dimensional case, small  $H^1$ data solutions for $p=3$}
In the three dimensional scaling critical case,
the weighted Strichartz estimate Lemma \ref{Lemma:3.4} doesn't seem sufficient
to control solutions uniformly.
So here, we transform \eqref{eq:1} into the corresponding wave equation.

The Cauchy problem \eqref{eq:1} with initial data $u(0) = u_0$
is rewritten as the following:
	\begin{align*}
	\Box u &= -i ( - i \partial_t - D ) |u|^{p-1} u\\
	&= i \frac{p+1}{2} |u|^{p-1} ( D u - i |u|^{p-1} u)\\
	&- i \frac{p-1}{2} |u|^{p-3} u^2 \overline{(D u - i |u|^{p-1} u)}
	+ i D ( |u|^{p-1} u )\\
	&- i \bigg( D (|u|^{p-1} u) + \frac{p+1}{2} |u|^{p-1} D u
	- \frac{p-1}{2} |u|^{p-3} u^2 D \overline u \bigg)
	+ p |u|^{2p-2} u\\
	&=: F_p(u).
	\end{align*}
Then the corresponding integral equation is the following:
	\begin{align}
	u(t)
	&= \cos(t D) u_0 + \frac{\sin(tD)}{D}(-i Du_0 - |u_0|^{p-1} u_0)
	\label{eq:12}\\
	&+ \int_0^t \frac{\sin((t-t')D)}{D} F_p(u)(t') dt'.
	\nonumber
	\end{align}
For any radially symmetric function $f$,
we define $\widetilde f$ as $\widetilde f(|x|) = f(x)$.
Then for any radial data,
\eqref{eq:12} is rewritten as
	\begin{align}
	&\widetilde u(t)
	\label{eq:13}\\
	&= \partial_t J[u_0](t) + J[-i Du_0 - |u_0|^{p-1} u_0](t)
	+ \int_0^t J[ F_p(u)(t')](t-t') dt'
	\nonumber
	\end{align}
where
	\[
	J[f](t,r) = \frac{1}{2r} \int_{|r-t|}^{r+t} \lambda \widetilde f(\lambda) d \lambda.
	\]
This transformation is justified as follows:

\begin{Lemma}
\label{Lemma:3.5}
Let $1 < p \leq 3$ and $u_0 \in H_{\mathrm{rad}}^1(\mathbb R^3)$
and $u \in C(0,T;H_{\mathrm{rad}}^1(\mathbb R^3))$ be the solution of \eqref{eq:4}.
Then $u$ is also the solution of \eqref{eq:13}
\end{Lemma}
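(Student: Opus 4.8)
The plan is to promote the mild solution of~\eqref{eq:4} to a strong solution of the first order semirelativistic equation, differentiate the equation to reach the wave equation $\Box u = F_p(u)$, and then obtain~\eqref{eq:12}--\eqref{eq:13} from the variation of constants formula for the linear wave equation together with the Kirchhoff spherical means formula for radial data. For the first step, since $1<p\le 3$ and $H^1(\mathbb R^3)\hookrightarrow L^6(\mathbb R^3)$ one has $\||f|^{p-1}f\|_{L^2(\mathbb R^3)}=\|f\|_{L^{2p}(\mathbb R^3)}^p\lesssim\|f\|_{H^1(\mathbb R^3)}^p$, so that $|u|^{p-1}u\in C([0,T);L^2(\mathbb R^3))$, while $Du\in C([0,T);L^2(\mathbb R^3))$ because $u\in C([0,T);H^1(\mathbb R^3))$. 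Rewriting the Duhamel term in~\eqref{eq:4} as $\int_0^t U(t-t')|u(t')|^{p-1}u(t')\,dt'=U(t)u_0-u(t)$, which lies in $C([0,T);H^1(\mathbb R^3))$, lets one differentiate~\eqref{eq:4} in $t$ in $L^2(\mathbb R^3)$ with no loss of derivative and conclude $\partial_t u=-iDu-|u|^{p-1}u$ in $C([0,T);L^2(\mathbb R^3))$. Hence $u\in C([0,T);H^1_{\mathrm{rad}}(\mathbb R^3))\cap C^1([0,T);L^2(\mathbb R^3))$ is a strong solution of $i\partial_t u-Du=-i|u|^{p-1}u$ with $u(0)=u_0$ and $\partial_t u(0)=-iDu_0-|u_0|^{p-1}u_0\in L^2(\mathbb R^3)$.

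Next I would apply the operator $-i(-i\partial_t-D)$ to the first order equation, which, exactly as in the computation displayed just before the statement, gives $\Box u=F_p(u)$, the only point requiring attention being the chain rule $\partial_t(|u|^{p-1}u)=\frac{p+1}{2}|u|^{p-1}\partial_t u+\frac{p-1}{2}|u|^{p-3}u^2\partial_t\overline u$. For $p\ge 2$ this is the ordinary chain rule for the $C^1$ map $z\mapsto|z|^{p-1}z$, its right hand side lying in $C([0,T);L^{2p/(2p-1)}(\mathbb R^3))$ by H\"older's inequality and $H^1\hookrightarrow L^6$; for $1<p<2$ the same identity holds in the sense of distributions, obtained by approximating $z\mapsto|z|^{p-1}z$ by the smooth maps $z\mapsto(\varepsilon+|z|^2)^{(p-1)/2}z$ and letting $\varepsilon\to 0$, using the uniform bound $(\varepsilon+|z|^2)^{(p-1)/2}\lesssim\varepsilon^{(p-1)/2}+|z|^{p-1}$. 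Since for $p\le3$ each summand of $F_p(u)$ then belongs to $L^q(\mathbb R^3)$ for some $q\in[6/5,2]$ (using $H^1\hookrightarrow L^6$ and boundedness of the Riesz transforms on $L^q$ for the term $D(|u|^{p-1}u)$), hence to $H^{-1}(\mathbb R^3)$, the identity $\Box u=F_p(u)$ holds in $C([0,T);H^{-1}(\mathbb R^3))$, and $u$ solves the wave Cauchy problem with Cauchy data $(u_0,\,-iDu_0-|u_0|^{p-1}u_0)\in H^1(\mathbb R^3)\times L^2(\mathbb R^3)$ in the class $C([0,T);L^2(\mathbb R^3))\cap C^1([0,T);H^{-1}(\mathbb R^3))$.

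To read off~\eqref{eq:12} and~\eqref{eq:13}, observe that $|\sin\tau|\le\min(1,|\tau|)$ makes the multiplier $(\sin(t|\xi|)/|\xi|)(1+|\xi|^2)^{1/2}$ bounded by $C(1+|t|)$, so that $\cos(tD)$ and $D^{-1}\sin(tD)$ are bounded as maps $H^1\to H^1$, $L^2\to H^1$, $H^{-1}\to L^2$ and $L^2\to L^2$ as needed; consequently the right hand side of~\eqref{eq:12}, with the above Cauchy data and inhomogeneity $F_p(u)\in C([0,T);H^{-1}(\mathbb R^3))$, lies in $C([0,T);L^2(\mathbb R^3))\cap C^1([0,T);H^{-1}(\mathbb R^3))$ and solves the same linear wave Cauchy problem as $u$ (now with $F_p(u)$ a fixed inhomogeneity); uniqueness for the linear wave equation in this class is immediate on the Fourier side, which proves~\eqref{eq:12}. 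Finally, for radial $f$ on $\mathbb R^3$, $(D^{-1}\sin(tD)f)(x)$ is the value at $(t,x)$ of the solution of $\Box v=0$, $v(0)=0$, $\partial_t v(0)=f$, which by Kirchhoff's formula equals $(4\pi t)^{-1}\int_{\partial B(x,t)}f\,dS$; evaluating this surface integral for radial $f$ by the substitution $\lambda=|y|$ (so that $2\lambda\,d\lambda=2|x|t\,d\mu$ with $\mu$ the cosine of the polar angle) yields exactly $J[f](t,|x|)$, while $(\cos(tD)f)(x)=\partial_t J[f](t,|x|)$. As $u_0$ is radial, every function occurring is radial, and substituting these identities into~\eqref{eq:12} and writing $\widetilde u(t,r)=u(t,x)$ produces~\eqref{eq:13}.

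The bulk of the work is routine; the two genuine care points are the loss free differentiation in the first step --- for which rewriting the Duhamel term as $U(t)u_0-u(t)$ is essential, since $|u|^{p-1}u$ is only in $L^2(\mathbb R^3)$, and in particular not in $H^1(\mathbb R^3)$ when $p=3$ --- and the justification of the chain rule for $1<p<2$. Once these are in hand, the long algebraic identity defining $F_p(u)$ is purely formal, and the passage through the wave equation and the Kirchhoff representation is standard; I expect the $p=3$ endpoint bookkeeping in the first two steps to be the part most likely to need a careful write-up.
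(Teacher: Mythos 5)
Your argument is correct, but the middle of it takes a genuinely different route from the paper's. After upgrading $u$ to a strong solution with $\partial_t u=-iDu-|u|^{p-1}u$ in $C([0,T);L^2(\mathbb R^3))$ (a step the paper also needs, stated there as $u\in C^1(0,T;L^2)$), you form the second-order equation $\Box u=F_p(u)$ in a negative-order Sobolev space, recover \eqref{eq:12} as the variation-of-constants representation via a uniqueness statement for the linear wave Cauchy problem in $C(L^2)\cap C^1(H^{-1})$, and then derive the radial formula \eqref{eq:13} from Kirchhoff's spherical-means representation of $D^{-1}\sin(tD)$. The paper never forms $\Box u$: it splits $U(t-t')=\cos((t-t')D)-i\sin((t-t')D)$ inside the Duhamel integral of \eqref{eq:4} and integrates the cosine part by parts in $t'$, using $\cos((t-t')D)=-\partial_{t'}\bigl[D^{-1}\sin((t-t')D)\bigr]$; this produces the boundary term $-D^{-1}\sin(tD)(|u_0|^{p-1}u_0)$ and the source $-i(-i\partial_t-D)(|u|^{p-1}u)=F_p(u)$ directly, so \eqref{eq:12} appears as an identity rather than as a consequence of uniqueness. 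Both routes rest on the same analytic inputs --- one time derivative of $|u|^{p-1}u$ in a very negative Sobolev space, which is exactly where your $H^1\hookrightarrow L^6$ bookkeeping at $p=3$ and the chain-rule discussion for $1<p<2$ enter --- but the paper's integration by parts avoids having to interpret $\partial_t^2u$ and to prove low-regularity uniqueness for the wave equation, while your version makes explicit the passage from $D^{-1}\sin(tD)$ to the averaging operator $J$, which the paper leaves implicit.
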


\begin{proof}
Since $H^1(\mathbb R^3) \hookrightarrow L^{2p}(\mathbb R^3)$,
$u \in C^1(0,T;L_{\mathrm{rad}}^2(\mathbb R^3))$
and $|u|^{p-1} u \in C^1(0,T;H_{\mathrm{rad}}^{s}(\mathbb R^3))$
with $s < - 3/2$.
Then $- i ( - i \partial_t - D) |u|^{p-1} u = F_p(u)$
holds in the $H_{\mathrm{rad}}^{s}(\mathbb R^3)$ setting.
Moreover,
	\[
	U(t) u_0 = \cos(tD) u_0 - i \sin(tD) u_0
	\]
and in the $H_{\mathrm{rad}}^{s}(\mathbb R^3)$ setting,
the following calculation is also justified:
	\begin{align*}
	& - \int_0^t U(t-t') ( |u(t')|^{p-1} u(t') ) dt'\\
	&= - \int_0^t \cos((t-t')D) ( |u(t')|^{p-1} u(t') ) dt'\\
	&- \int_0^t -i \sin((t-t')D) ( |u(t')|^{p-1} u(t') ) dt'\\
	&= \bigg[ \frac{\sin((t-t')D)}{D} ( |u(t')|^{p-1} u(t') ) \bigg]_{t'=0}^t\\
	&- \int_0^t \frac{\sin((t-t')D)}{D} \partial_t ( |u(t')|^{p-1} u(t') ) dt'\\
	&- \int_0^t -i \sin((t-t')D) ( |u(t')|^{p-1} u(t') ) dt'\\
	&= - \frac{\sin(tD)}{D} ( |u_0|^{p-1} u_0 )
	- i \int_0^t \frac{\sin((t-t')D)}{D} ( - i \partial_t - D )( |u(t')|^{p-1} u(t') ) dt'.
	\end{align*}
Therefore $u$ is also a solution of \eqref{eq:13}.
\end{proof}

To obtain the uniform control,
we use the estimates below regarding $J$.
For any $f: [0,\infty) \to \mathbb C$,
we define $A[f] : \mathbb R \to \mathbb C$ as
$A[f](\lambda) = f(|\lambda|)$.
See also \cite{KM}.

\begin{Lemma}
\label{Lemma:3.6}
Let $f : [0,\infty) \to \mathbb C$.
Then
	\[
	\bigg\| \frac 1 {2 \cdot}
	\int_{|\cdot -t|}^{\cdot +t} f(\lambda) d \lambda \bigg\|_{L^\infty(0,\infty)}
	\leq M[A[f]](t).
	\]
\end{Lemma}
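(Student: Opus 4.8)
The plan is to reduce the stated inequality to a pointwise bound in the variable hidden behind the symbol $\cdot$. Write $r$ for that variable; then the quantity whose $L^{\infty}(0,\infty)$ norm is taken is $g(r):=\frac{1}{2r}\int_{|r-t|}^{r+t}f(\lambda)\,d\lambda$, and it suffices to show $|g(r)|\le M[A[f]](t)$ for every $r>0$, where $M$ denotes the Hardy--Littlewood maximal function and $A[f](\lambda)=f(|\lambda|)$ is the even extension of $f$. Taking the supremum over $r>0$ (hence also the essential supremum, since $g$ is locally integrable whenever $f$ is) then yields the claim.

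First I would move the absolute value inside the integral and use that the interval of integration $[|r-t|,r+t]$ is contained in $[0,\infty)$, so that $|f(\lambda)|=|A[f](\lambda)|$ on it:
\[
|g(r)|\le \frac{1}{2r}\int_{|r-t|}^{r+t}|f(\lambda)|\,d\lambda
=\frac{1}{2r}\int_{|r-t|}^{r+t}|A[f](\lambda)|\,d\lambda .
\]
The key point is then the inclusion $[|r-t|,r+t]\subseteq[t-r,t+r]$, valid for all $r,t>0$: when $r\le t$ the two intervals coincide, and when $r>t$ one has $t-r<0<r-t$ while the upper endpoints agree. Since $|A[f]|\ge 0$ and the interval $[t-r,t+r]$ is centred at $t$ with length $2r$, enlarging the domain of integration gives
\[
|g(r)|\le \frac{1}{2r}\int_{t-r}^{t+r}|A[f](\lambda)|\,d\lambda\le M[A[f]](t),
\]
which is precisely what is needed.

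There is no genuine obstacle in this argument; the only step deserving attention is the passage to the even extension $A[f]$, which is exactly what makes the interval enlargement legitimate. Indeed, estimating directly by an average over the original interval $[r-t,r+t]$ in the range $r>t$ would produce $M[A[f]](r)$ rather than $M[A[f]](t)$, and it is the latter bound --- uniform in $r$, with the maximal function frozen at $t$ --- that is used when controlling the Duhamel term in \eqref{eq:13}.
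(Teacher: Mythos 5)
Your argument is correct and coincides with the paper's own proof: both reduce the claim to the inclusion $[|r-t|,r+t]\subseteq[t-r,t+r]$ (trivial equality for $r\le t$, strict inclusion for $r>t$), pass to the even extension $A[f]$ so that the enlarged integral makes sense, and recognize the resulting average over the interval centred at $t$ of half-length $r$ as being dominated by $M[A[f]](t)$. No further comment is needed.
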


\begin{proof}
If $r \leq t$,
then
	\[
	\frac{1}{2r} \int_{t-r}^{t+r} |f(\lambda)| d \lambda
	\leq M[f](t).
	\]
If $r > t$,
then
	\[
	\frac{1}{2r} \int_{r-t}^{t+r} |f(\lambda)| d \lambda
	\leq \frac{1}{2r} \int_{t-r}^{t+r} |A[f](\lambda)| d \lambda
	\leq M[A[f]](t).
	\]

\end{proof}

\begin{Corollary}
\label{Corollary:3.7}
Let $f : \mathbb R^3 \to \mathbb C$ be radial.
Then
	\[
	\| J[f] \|_{L^2(0,T;L^\infty(\mathbb R^3))}
	\leq C \|f\|_{L_{\mathrm{rad}}^2(\mathbb R^3)}.
	\]
\end{Corollary}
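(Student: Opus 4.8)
\textbf{Proof plan for Corollary \ref{Corollary:3.7}.}
The strategy is to reduce the estimate to Lemma \ref{Lemma:3.6} applied to the weighted radial profile, and then to convert the resulting one--dimensional bound back to a genuine $L^2(\mathbb R^3)$ bound by the boundedness of the Hardy--Littlewood maximal operator on $L^2$ together with polar coordinates. First I would set $h(\lambda) = \lambda \widetilde f(\lambda)$ for $\lambda \geq 0$, so that by the very definition of $J$ one has $J[f](t,r) = \frac{1}{2r}\int_{|r-t|}^{r+t} h(\lambda)\,d\lambda$. Applying Lemma \ref{Lemma:3.6} with $h$ in place of $f$ gives, for each fixed $t>0$,
\[
\| J[f](t,\cdot) \|_{L^\infty(\mathbb R^3)}
= \sup_{r>0} |J[f](t,r)|
\leq M[A[h]](t),
\]
where $M$ denotes the Hardy--Littlewood maximal function on $\mathbb R$ and $A[h](\lambda) = h(|\lambda|)$.

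Next I would take the $L^2$ norm in $t$ over $(0,T)$, enlarge the integration domain to all of $\mathbb R$, and invoke the $L^2(\mathbb R)$-boundedness of the maximal operator:
\[
\| J[f] \|_{L^2(0,T;L^\infty(\mathbb R^3))}
\leq \| M[A[h]] \|_{L^2(0,T)}
\leq \| M[A[h]] \|_{L^2(\mathbb R)}
\lesssim \| A[h] \|_{L^2(\mathbb R)}.
\]
Finally I would compute, using that $A[h]$ is even,
\[
\| A[h] \|_{L^2(\mathbb R)}^2
= 2 \int_0^\infty \lambda^2 |\widetilde f(\lambda)|^2 \, d\lambda
= \frac{1}{2\pi} \int_{\mathbb R^3} |f(x)|^2 \, dx,
\]
which is exactly $\tfrac{1}{2\pi}\|f\|_{L^2_{\mathrm{rad}}(\mathbb R^3)}^2$; combining the two displays yields the claim with an absolute constant $C$.

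There is no serious obstacle here: the content of the estimate is entirely carried by Lemma \ref{Lemma:3.6} and the classical maximal function inequality. The only points requiring a little care are (i) recognizing that the correct object to feed into Lemma \ref{Lemma:3.6} is the \emph{weighted} profile $\lambda\widetilde f(\lambda)$ rather than $\widetilde f$ itself — this is what makes the weight $\lambda^2$ in polar coordinates match the square of the one--dimensional $L^2$ weight — and (ii) the harmless extension of the time integral from $(0,T)$ to $\mathbb R$ so that the maximal inequality applies on the whole line. If one prefers to avoid citing $L^2$-boundedness of $M$, one could instead note $M[A[h]] \le A[h]^{\ast\ast}$ and argue directly, but the maximal inequality is cleanest.
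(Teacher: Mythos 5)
Your proposal is correct and follows essentially the same route as the paper's proof: you introduce the weighted profile $g(\lambda)=\lambda\widetilde f(\lambda)$, apply Lemma \ref{Lemma:3.6} pointwise in $t$, and conclude by the $L^2$-boundedness of the Hardy--Littlewood maximal operator together with the polar-coordinate identity $\|g\|_{L^2(0,\infty)}\simeq\|f\|_{L^2_{\mathrm{rad}}(\mathbb R^3)}$. The paper's version is just a one-line compression of exactly this argument, so no further comment is needed.
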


\begin{proof}
Let $g(\lambda) = \lambda \widetilde f (\lambda)$.
Then
	\[
	\| J[f] \|_{L^2(0,T;L^\infty(0,\infty))}
	\leq \| M[A[g]] \|_{L^2(0,T)}
	\leq C \| g \|_{L^2(0,\infty)}
	= C \| f \|_{L_{\mathrm{rad}}^2(\mathbb R^3)}.
	\]
\end{proof}

\begin{Corollary}
\label{Corollary:3.8}
Let $h : [0,\infty) \times \mathbb R^3 \to \mathbb C$ be radial.
Then
	\[
	\bigg\| \int_0^t J[h(t')](t-t') dt' \bigg\|_{L^2(0,T;L^\infty(0,\infty))}
	\leq C \|h\|_{L^1(0,T;L_{\mathrm{rad}}^2(\mathbb R^3))}.
	\]
\end{Corollary}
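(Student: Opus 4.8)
The plan is to derive Corollary~\ref{Corollary:3.8} from Corollary~\ref{Corollary:3.7} by an elementary argument based on the generalized Minkowski integral inequality, exploiting that the operator $f \mapsto J[f]$ is linear in $f$ and that the $L^2$ norm in the time variable is translation invariant. In effect, the Duhamel term in \eqref{eq:13} is a time-convolution of $h$ against the ``fundamental solution'' $J$, and the estimate is the vector-valued version of the Young bound $L^1 \ast L^2 \hookrightarrow L^2$.

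First I would rewrite the Duhamel term, incorporating the causality constraint $t' \le t$ into an indicator function, as $\int_0^T \mathbf{1}_{\{t' < t\}} J[h(t')](t-t')\, dt'$, and apply the Minkowski integral inequality in the Banach space $L^2(0,T;L^\infty(0,\infty))$ to move the $dt'$ integration outside the norm:
\[
\bigg\| \int_0^t J[h(t')](t-t')\, dt' \bigg\|_{L^2(0,T;L^\infty(0,\infty))}
\le \int_0^T \big\| \mathbf{1}_{\{t' < t\}} J[h(t')](t-t') \big\|_{L^2(0,T;L^\infty(0,\infty))}\, dt'.
\]
For each fixed $t'$ I would then change variables $s = t - t'$ in the inner time integral; since $t$ ranges over $(t',T)$, the variable $s$ ranges over $(0,T-t') \subseteq (0,T)$, so the inner norm is dominated by $\| J[h(t')] \|_{L^2(0,T;L^\infty(0,\infty))}$. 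Corollary~\ref{Corollary:3.7}, applied with $f = h(t')$ (which is radial for a.e.\ $t'$), gives $\| J[h(t')] \|_{L^2(0,T;L^\infty(0,\infty))} \le C \| h(t') \|_{L_{\mathrm{rad}}^2(\mathbb R^3)}$. Integrating this in $t'$ over $(0,T)$ produces $C \int_0^T \| h(t') \|_{L_{\mathrm{rad}}^2(\mathbb R^3)}\, dt' = C \| h \|_{L^1(0,T;L_{\mathrm{rad}}^2(\mathbb R^3))}$, which is the asserted bound.

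All steps are routine, so I do not expect a genuine obstacle; the only point requiring a little care is the bookkeeping of the domains of integration — making sure the causality constraint is encoded so that $J[h(t')]$ is evaluated only at the nonnegative time $t-t'$ — together with the mild measurability check needed to invoke the Minkowski inequality, namely that $t' \mapsto J[h(t')](t-t')$ is a well-defined $L^2(0,T;L^\infty(0,\infty))$-valued integrable function, which follows from the explicit formula for $J$ and the hypothesis $h \in L^1(0,T;L_{\mathrm{rad}}^2(\mathbb R^3))$.
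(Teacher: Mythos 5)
Your proposal is correct and is essentially the paper's argument: the paper likewise applies the Minkowski integral inequality to pull the $dt'$ integral outside, uses translation invariance of the $L^2$ norm in time (restricting to $(t',T)$), and then invokes the single-time bound of Corollary \ref{Corollary:3.7} (which it unfolds inline via the maximal-function estimate $M[A[H(t')]]$ of Lemma \ref{Lemma:3.6}). The only cosmetic difference is that you work directly in the vector-valued space $L^2(0,T;L^\infty(0,\infty))$, whereas the paper first takes the $L^\infty_r$ supremum pointwise in $t$ and then applies Minkowski in the scalar $L^2(0,T)$.
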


\begin{proof}
Let $H(t,\lambda) = \lambda \widetilde h (t,\lambda)$.
Then
	\begin{align*}
	\bigg\| \int_0^t J[h(t')](t-t') dt' \bigg\|_{L^2(0,T;L^\infty(0,\infty))}
	&\leq \bigg\| \int_0^t M[A[H(t')]](t-t') dt' \bigg\|_{L^2(0,T)}\\
	&\leq \int_0^T \| M[A[H(t')]]\|_{L^2(t',T)} dt'\\
	&\leq C \int_0^T \| h(t')\|_{L_{\mathrm{rad}}^2(\mathbb R^3)} dt'.
	\end{align*}
\end{proof}

\begin{Corollary}[Hardy]
\label{Corollary:3.9}
Let $f \in C^1(\mathbb R;\mathbb C)$.
Then
	\[
	\bigg\| \frac{d}{dt} \bigg( \frac{1}{2r}
	\int_{|r-t|}^{r+t} \lambda f(\lambda) d \lambda \bigg) \bigg\|_{L^2(0,\infty;L^\infty(0,\infty))}
	\leq C \| r f'\|_{L^2(0,\infty)}.
	\]
\end{Corollary}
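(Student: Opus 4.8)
The statement to be proved is Corollary \ref{Corollary:3.9}, a Hardy-type bound for the time derivative of the quantity
\[
K[f](t,r) := \frac{1}{2r} \int_{|r-t|}^{r+t} \lambda f(\lambda)\, d\lambda,
\]
namely that the $L^2_t L^\infty_r$ norm of $\partial_t K[f]$ is controlled by $\| r f'\|_{L^2(0,\infty)}$. The natural approach is to differentiate under the integral sign, rewrite $\partial_t K[f]$ in a form to which the one-dimensional maximal-function machinery of Lemma \ref{Lemma:3.6} (or rather its "derivative" analogue) applies, and then invoke the boundedness of the Hardy--Littlewood maximal operator on $L^2(\mathbb R)$ as in Corollaries \ref{Corollary:3.7} and \ref{Corollary:3.8}. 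Concretely, for $t>0$ one computes
\[
\partial_t K[f](t,r) = \frac{1}{2r}\Big( (r+t) f(r+t) + \mathrm{sgn}(r-t)\,(|r-t|) f(|r-t|) \cdot(-1)\cdot\mathrm{sgn}(r-t)\Big),
\]
so that $\partial_t K[f](t,r) = \frac{1}{2r}\big( (r+t)f(r+t) - |r-t| f(|r-t|)\big)$. The key observation is that this difference can be written as an integral of $(\lambda f(\lambda))' = f(\lambda) + \lambda f'(\lambda)$ over $[|r-t|, r+t]$ — but that reintroduces the $f$ term, which is not what we want. Instead I would proceed more carefully: write $(r+t)f(r+t) - |r-t|f(|r-t|) = \int_{|r-t|}^{r+t} \big(f(\lambda) + \lambda f'(\lambda)\big)\, d\lambda$ and then integrate the $f(\lambda)$ term by parts against the measure, or better, recognize that after dividing by $2r$ the combination telescopes so that only the $\lambda f'(\lambda)$ contribution survives up to boundary terms that themselves are controlled. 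The cleanest route is to note that $K[f](t,r)$ solves the radial wave equation, so $\partial_t K[f]$ is (up to the same $J$-type structure) again expressible through the one-dimensional averages of $rf'$, and then Lemma \ref{Lemma:3.6} applied to $\lambda\mapsto \lambda f'(\lambda)$ gives the pointwise bound by $M[A[rf']](t)$, whose $L^2(0,\infty)$ norm is $\lesssim \| rf'\|_{L^2(0,\infty)}$ by the maximal theorem and the evenness reduction $A$.

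More precisely, I expect the identity to reduce to something of the shape
\[
\partial_t \Big( \frac{1}{2r}\int_{|r-t|}^{r+t}\lambda f(\lambda)\,d\lambda \Big)
= \frac{1}{2r}\int_{|r-t|}^{r+t} \lambda f'(\lambda)\,d\lambda + (\text{boundary terms that cancel}),
\]
where the cancellation uses that the endpoint contributions from differentiating the limits combine with the $f$-term produced by integrating $(\lambda f)' = f + \lambda f'$. Once in this form, Lemma \ref{Lemma:3.6} with $f$ replaced by $\lambda\mapsto\lambda f'(\lambda)$ yields
\[
\Big\| \frac{1}{2\cdot}\int_{|\cdot - t|}^{\cdot + t}\lambda f'(\lambda)\,d\lambda\Big\|_{L^\infty(0,\infty)} \le M\big[A[\,\cdot\, f'(\cdot)]\big](t),
\]
and then $\| M[A[\,\cdot\, f'(\cdot)]]\|_{L^2(0,\infty)} \le C\,\| A[\,\cdot\, f'(\cdot)]\|_{L^2(\mathbb R)} = C\,\| r f'\|_{L^2(0,\infty)}$, exactly as in the proof of Corollary \ref{Corollary:3.7}. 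This is the whole estimate.

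The main obstacle is bookkeeping the boundary terms in the differentiation under the integral sign and verifying that they genuinely cancel (rather than leaving a residual $\| f\|$-type term, which would not be controlled by $\| rf'\|_{L^2}$ alone). For $r > t$ the lower limit is $r - t$ and for $r < t$ it is $t - r$, so the sign of the Jacobian in $\partial_t |r-t|$ flips, and one must check both regimes; the factor $\frac{1}{2r}$ is what makes the endpoint terms assemble into the telescoped form $\frac{1}{2r}\int \lambda f'$. I would also need to be slightly careful that $f \in C^1(\mathbb R;\mathbb C)$ with $rf' \in L^2$ is enough regularity and decay to justify the manipulations and the application of the maximal theorem (the evenness trick $A$ handles the region $r > t$ just as in Lemma \ref{Lemma:3.6}). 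Modulo that routine case analysis, the corollary follows immediately from Lemma \ref{Lemma:3.6} and $L^2$-boundedness of the Hardy--Littlewood maximal function, which is why it is stated as a corollary.
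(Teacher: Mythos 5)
Your plan is right about the maximal--function machinery but wrong at the one step you yourself flag as the ``main obstacle'': the zeroth--order term does \emph{not} cancel, and the missing ingredient is Hardy's inequality --- which is why the corollary carries that name. First, a sign slip: with $g(\lambda)=\lambda f(\lambda)$, Leibniz gives $\partial_t\big(\tfrac1{2r}\int_{|r-t|}^{r+t}g\big)=\tfrac1{2r}\big[(r+t)f(r+t)+(r-t)f(|r-t|)\big]$, not $(r+t)f(r+t)-|r-t|f(|r-t|)$. The clean way to organize this is through the odd extension $G(\lambda)=\lambda f(|\lambda|)$ of $g$: by oddness, $\tfrac1{2r}\int_{|r-t|}^{r+t}g=\tfrac1{2r}\int_{t-r}^{t+r}G$, whence
\[
\partial_t\Big(\frac1{2r}\int_{|r-t|}^{r+t}\lambda f(\lambda)\,d\lambda\Big)
=\frac{G(t+r)-G(t-r)}{2r}
=\frac1{2r}\int_{t-r}^{t+r}\big(f(|\lambda|)+|\lambda|\,f'(|\lambda|)\big)\,d\lambda .
\]
The second summand is $A[\,\cdot\,f'(\cdot)]$ and is handled exactly as you propose: the average over the interval of radius $r$ centered at $t$ is bounded by $M[A[\,\cdot\,f'(\cdot)]](t)$, and the $L^2$ maximal theorem gives $\|M[A[\,\cdot\,f'(\cdot)]]\|_{L^2(0,\infty)}\lesssim\|rf'\|_{L^2(0,\infty)}$. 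But the first summand $f(|\lambda|)=A[f](\lambda)$ survives every rearrangement of the boundary terms. Indeed for $f\equiv c$ one has $\tfrac1{2r}\int_{|r-t|}^{r+t}\lambda c\,d\lambda=ct$, so the time derivative is the constant $c$ while $\|rf'\|_{L^2(0,\infty)}=0$; hence no argument that uses only $f'$ can close, and the hoped-for telescoping is impossible.

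The correct completion is to bound the residual term by $M[A[f]](t)$, use $\|M[A[f]]\|_{L^2(0,\infty)}\lesssim\|f\|_{L^2(0,\infty)}$, and then invoke the one--dimensional Hardy inequality $\|f\|_{L^2(0,\infty)}\le 2\,\|rf'\|_{L^2(0,\infty)}$ (the radial form of $\||x|^{-1}u\|_{L^2(\mathbb R^3)}\le 2\|\nabla u\|_{L^2(\mathbb R^3)}$). This is the eponymous and essential step your proposal never invokes. Two further remarks: the Hardy step tacitly requires $f$ to vanish at infinity (the constant example above satisfies $f\in C^1$ and $rf'\in L^2$ but violates the conclusion, so the corollary as literally stated needs this implicit decay); in the application $f$ is a radial $H^1(\mathbb R^3)$ datum, so this is harmless, but a complete proof should record it. The maximal--function part of your argument, including the evenness reduction via $A$ and the comparison with Corollaries \ref{Corollary:3.7} and \ref{Corollary:3.8}, is fine as written.
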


\begin{proof}[Proof of Proposition \ref{Proposition:1.4}]
For $0 < T < 1$ and $p=3$,
By Corollaries \ref{Corollary:3.7}, \ref{Corollary:3.8}, and \ref{Corollary:3.9},
	\begin{align}
	&\|u\|_{L^2(0,T;L_{\mathrm{rad}}^\infty(\mathbb R^3))}
	\label{eq:14}\\
	&\lesssim \|u_0\|_{H_{\mathrm{rad}}^1(\mathbb R^3)}
	+ \|u_0\|_{H_{\mathrm{rad}}^1(\mathbb R^3)}^3
	+ \|F_3\|_{L^1(0,T;L_{\mathrm{rad}}^2(\mathbb R^3))}
	\nonumber\\
	&\lesssim \|u_0\|_{H_{\mathrm{rad}}^1(\mathbb R^3)}
	+ \|u_0\|_{H_{\mathrm{rad}}^1(\mathbb R^3)}^3
	\nonumber\\
	&+ \|u\|_{L^{2}(0,T;L_{\mathrm{rad}}^\infty(\mathbb R^3)}^{2}
	\|u\|_{L^\infty(0,T;H_{\mathrm{rad}}^1(\mathbb R^3))}
	+ \| |u|^{5} \|_{L^1(0,T;L_{\mathrm{rad}}^2(\mathbb R^3))}.
	\nonumber
	\end{align}
Since
	\[
	\| |u|^{5} \|_{L^1(0,T;L_{\mathrm{rad}}^2(\mathbb R^3))}
	\leq \| u \|_{L^{2}(0,T;L_{\mathrm{rad}}^\infty(\mathbb R^3))}^{2}
	\|u\|_{L^\infty(0,T;H_{\mathrm{rad}}^1(\mathbb R^3))}^3,
	\]
by the unitarity of $U(t)$,
	\[
	\|u\|_{L^\infty(0,T;H_{\mathrm{rad}}^1(\mathbb R^3))}
	\lesssim \|u_0\|_{H_{\mathrm{rad}}^1(\mathbb R^3)}
	+ \|u\|_{L^{p-1}(0,T;L_{\mathrm{rad}}^\infty(\mathbb R^3))}^{p-1}
	\|u_0\|_{H_{\mathrm{rad}}^1(\mathbb R^3)}.
	\]
Let
$X_{\mathrm{rad}}^1(0,T) =
L^\infty(0,T;H_{\mathrm{rad}}^1(\mathbb R^3))
\cap L^2(0,T;L_{\mathrm{rad}}^\infty(\mathbb R^3))$.
Then,
for sufficiently small initial data $u_0$,
$\Phi$ maps from $B_{X_{\mathrm{rad}}^1(0,T)}(R)$
into itself with some $T$ and $R$.
Moreover,
	\begin{align*}
	&\||u|^{2} u - |v|^{2} v \|_{L^1(0,T;H_{\mathrm{rad}}^1(\mathbb R^3))}\\
	&\lesssim
	(\|u\|_{X_{\mathrm{rad}}(0,T)} + \|v\|_{X_{\mathrm{rad}}(0,T)} )^{2}
	\|u-v\|_{L^\infty(0,T;H_{\mathrm{rad}}^1(\mathbb R^3))}.
	\end{align*}
Since
	\begin{align*}
	&| F(u) - F(v)|\\
	&= \Big| i \big( D (|u|^{2p-1} u) - 2 |u|^{2} D u
	- u^2 D \overline u \big)
	+ 3 |u|^{4} u\\
	&- i \big( D (|v|^{2} v) - 2|v|^{2} D v
	- v^2 D \overline v \big)
	- 3 |v|^{4} v \Big|\\
	&\leq | D ( |u|^{2} u - |v|^{2} v) |
	+ |u|^{2} | D (u-v) |\\
	&+ \Big( \big| |u|^{2} - |v|^{2} \big|
	+ \big| u^2 - v^2 \big| \Big) | D v |
	+ 3 \big| |u|^{4} u - |v|^{4} v \big|,
	\end{align*}
we have
	\begin{align*}
	&\|F(u) - F(v)\|_{L^1(0,T;L_{\mathrm{rad}}^2(\mathbb R^3))}\\
	&\lesssim
	(\|u\|_{X_{\mathrm{rad}}^1(0,T)} + \|v\|_{X_{\mathrm{rad}}^1(0,T)})^{2}
	\|u-v\|_{X_{\mathrm{rad}}^1(0,T)}\\
	&+
	(\|u\|_{X_{\mathrm{rad}}^1(0,T)} + \|v\|_{X_{\mathrm{rad}}^1(0,T)})^{4}
	\|u-v\|_{X_{\mathrm{rad}}^1(0,T)}.
	\end{align*}
This means for sufficiently small $u_0$,
$\Phi$ is a contraction map on $B_{X_{\mathrm{rad}}^1(0,T)}(R)$.
\end{proof}



\end{document}